\pgfplotsset{compat=1.10}
\newtheorem{theorem}{Theorem}[section]
\newtheorem{corollary}[theorem]{Corollary}
\newtheorem{lemma}[theorem]{Lemma}
\newtheorem{proposition}[theorem]{Proposition}
\newtheorem{remark}[theorem]{Remark}
\DeclareMathAlphabet{\mathpzc}{OT1}{pzc}{m}{it}
\newcommand{\rd}{\mathrm{d}}
\definecolor{cadmiumgreen}{rgb}{0.0, 0.42, 0.24}
\numberwithin{equation}{section}
\title[]{Stationary solutions to a nonlocal fourth-order\\ elliptic obstacle problem}
\author{Philippe Lauren\c{c}ot}
\address{Institut de Math\'ematiques de Toulouse, UMR~5219, Universit\'e de Toulouse, CNRS \\ F--31062 Toulouse Cedex 9, France}
\email{laurenco@math.univ-toulouse.fr}
\thanks{Partially supported by the CNRS Projet International de Coop\'eration Scientifique PICS07710}
\author{Christoph Walker}
\address{Leibniz Universit\"at Hannover\\ Institut f\" ur Angewandte Mathematik \\ Welfengarten 1 \\ D--30167 Hannover\\ Germany}
\email{walker@ifam.uni-hannover.de}
\begin{document}

\date{\today}

\begin{abstract}
 Existence of stationary solutions to a nonlocal fourth-order elliptic obstacle problem arising from the modelling of microelectromechanical systems with heterogeneous dielectric properties is shown. The underlying variational structure of the model is exploited to construct these solutions as minimizers of a suitably regularized energy, which allows us to weaken considerably the assumptions on the model used in a previous article.
\end{abstract}

\keywords{MEMS, stationary solution, minimizer, bilaplacian, variational inequality}
\subjclass[2010]{35K86 - 49J40 - 35J35 - 35Q74}

\maketitle

\section{Introduction}

Idealized electrostatically actuated microelectromechanical systems (MEMS) are made up of an elastic conducting plate which is clamped on its boundary and suspended above a rigid conducting ground plate. Their dynamics results from the competition between mechanical and electrostatic forces in which the elastic plate is deformed by a Coulomb force induced by holding the two plates at different electrostatic potentials. When the electrostatic forces dominate the mechanical ones, the elastic plate comes into contact with the ground plate, thereby generating a short circuit and leading to the occurrence of a touchdown singularity in the related mathematical models, see \cite{EGG10, FMPS06, GPW05, LWBible, Pel01a, PeB03} and the references therein. However, covering the ground plate with a thin insulating layer prevents a direct contact of the two plates and, from a mathematical point of view, features a constraint of obstacle-type which hinders the touchdown singularity. Different models have been developed to take into account the influence of the coating layer deposited on the ground plate, most of them relying on the so-called small aspect ratio approximation and describing the state of the MEMS device by the sole deformation of the elastic plate \cite{AmEtal, BG01, LLG14, LLG15, YZZ12}. A more elaborate model is derived in \cite[Section~5]{LW19}, in which the state of the device is not only given by the deformation of the elastic plate, but also by the electrostatic potential in the region between the two plates.

To give a more precise account, we restrict ourselves to a two-dimensional setting, neglecting variations in the transverse horizontal direction, so that the geometry of the device under study herein is the following, see Figure~\ref{Fig3}. At rest, the cross-section  of the elastic plate is $D:=(-L,L)$, $L>0$, and it is clamped at its boundary $(x,z)=(\pm L,0)$. The fixed ground plate has the same shape $D$ and is located at $z=-H-d$. It is coated with an insulating layer $$\Omega_1 := D \times (-H-d,-H)$$ of thickness $d>0$  with \textit{a priori} non-uniform dielectric permittivity $\sigma_1>0$ and which cannot be penetrated by the elastic plate. {As a consequence, the vertical displacement $u:\bar D \rightarrow \mathbb{R}$ of the elastic plate actually ranges in $[-H,\infty)$ and the contact region $\{ (x,-H)\ :\ x\in D\,, \ u(x)=-H\}$ between the insulating layer and the elastic plate  might be non-empty. We assume also that the free space 
\begin{equation*}
 \Omega_2(u):=\left\{(x,z)\in D\times \mathbb{R}\,:\, -H<  z <  u(x)\right\}
\end{equation*} 
between the upper part of the insulating layer and the elastic plate has uniform permittivity $\sigma_2>0$, and we denote the electrostatic potential in the device
\begin{equation*}
\Omega({u}):=\left\{(x,z)\in D\times \mathbb{R} \,:\, -H-d<  z <  u(x)\right\}=\Omega_1\cup  \Omega_2( {u})\cup \Sigma(u)\,,
\end{equation*}
by $\psi_u$, where $\Sigma(u)$ is the interface
\begin{equation*}
\Sigma(u):=\{(x,-H)\,:\, x\in D,\, u(x)>-H\}\,. 
\end{equation*}
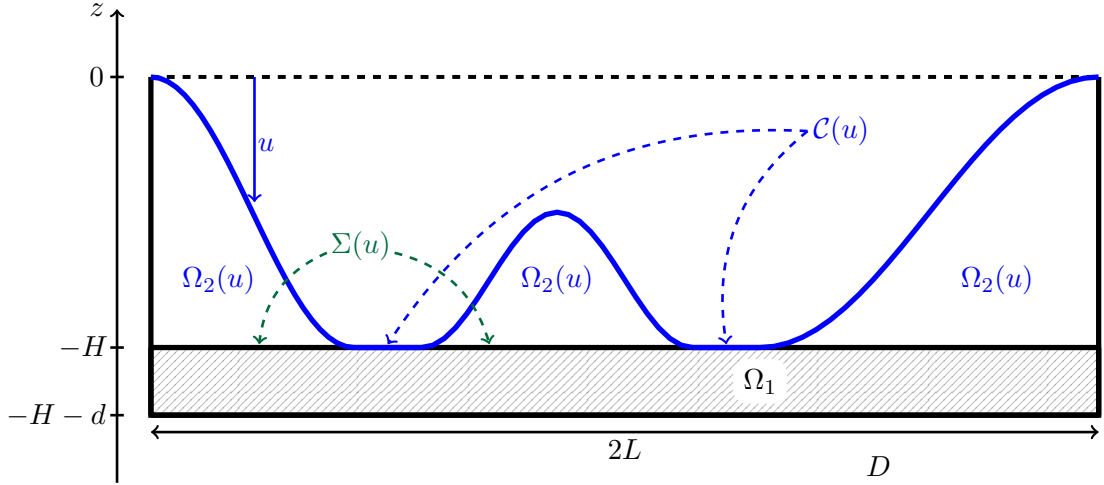
\begin{figure}
	\begin{tikzpicture}[scale=0.9]
	\draw[black, line width = 1.5pt, dashed] (-7,0)--(7,0);
	\draw[black, line width = 2pt] (-7,0)--(-7,-5);
	\draw[black, line width = 2pt] (7,-5)--(7,0);
	\draw[black, line width = 2pt] (-7,-5)--(7,-5);
	\draw[black, line width = 2pt] (-7,-4)--(7,-4);
	\draw[black, line width = 2pt, fill=gray, pattern = north east lines, fill opacity = 0.5] (-7,-4)--(-7,-5)--(7,-5)--(7,-4);
	\draw[blue, line width = 2pt] plot[domain=-7:-4] (\x,{-2-2*cos((pi*(\x+4)/3) r)});
	\draw[blue, line width = 2pt] (-4,-4)--(-3,-4);
	\draw[blue, line width = 2pt] plot[domain=-3:1] (\x,{-3+cos((pi*(\x+1)/2) r)});
	\draw[blue, line width = 2pt] (1,-4)--(2,-4);
	\draw[blue, line width = 2pt] plot[domain=2:7] (\x,{-2-2*cos((pi*(\x-2)/5) r)});
	\draw[blue, line width = 1pt, arrows=->] (-5.47,0)--(-5.47,-1.85);
	\node at (-5.3,-1) {${\color{blue} u}$};
	\node[draw,rectangle,white,fill=white, rounded corners=5pt] at (2,-4.5) {$\Omega_1$};
	\node at (2,-4.5) {$\Omega_1$};
	\node at (-6,-3) {${\color{blue} \Omega_2(u)}$};
	\node at (-1,-3) {${\color{blue}\Omega_2(u)}$};
	\node at (5.5,-3) {${\color{blue} \Omega_2(u)}$};
	\node at (3.75,-5.75) {$D$};
	\node at (-7.8,1) {$z$};
	\draw[black, line width = 1pt, arrows = ->] (-7.5,-6)--(-7.5,1);
	\node at (-8.4,-5) {$-H-d$};
	\draw[black, line width = 1pt] (-7.6,-5)--(-7.4,-5);
	\node at (-8,-4) {$-H$};
	\draw[black, line width = 1pt] (-7.6,-4)--(-7.4,-4);
	\node at (-7.8,0) {$0$};
	\draw[black, line width = 1pt] (-7.6,0)--(-7.4,0);
	\node at (0,-5.5) {$2L$};
	\draw[black, line width = 1pt, arrows = <->] (-7,-5.25)--(7,-5.25);
	\node at (3.2,-0.8) {${\color{blue} \mathcal{C}(u)}$};
	\draw (2.7,-0.8) edge[->,bend right,blue, dashed, line width = 1pt] (1.5,-3.95);
	\draw (2.7,-0.8) edge[->,bend right, blue, dashed, line width = 1pt] (-3.5,-3.95);
	\node at (-3.9,-2.5) {${\color{cadmiumgreen} \Sigma(u)}$};
	\draw (-4.4,-2.6) edge[->,bend right, cadmiumgreen, dashed, line width = 1pt] (-5.4,-3.95);
	\draw (-3.4,-2.6) edge[->,bend left, cadmiumgreen, dashed, line width = 1pt] (-2,-3.95);
	\end{tikzpicture}
	\caption{Geometry of $\Omega(u)$ for a state $u\in \bar{S}_0$ with non-empty (and disconnected) coincidence set $\mathcal{C}(u)$.}\label{Fig3}
\end{figure}
According to the model derived in \cite[Section~5]{LW19}, equilibrium configurations of the above described device are weak solutions $u\in H_D^2(D)$} to the fourth-order  obstacle problem
\begin{equation}
\beta\partial_x^4 u-(\tau+\alpha\|\partial_x u\|_{L_2(D)}^2)\partial_x^2 u+\partial\mathbb{I}_{\bar{S}_0}(u) \ni -g(u) \;\;\text{ in }\;\; D\,, \label{bennygoodman}
\end{equation}
where 
\begin{equation*}
H_D^2(D):= \left\{ v\in H^2(D)\,:\, v(\pm L) = \partial_x v(\pm L)=0 \right\} \,,
\end{equation*}
and $\partial\mathbb{I}_{\bar{S}_0}(u)$ denotes the subdifferential in $H_D^2(D)$ of the indicator function $\mathbb{I}_{\bar{S}_0}$ of the closed convex subset 
\begin{equation*}
\bar{S}_0 := \left\{ v\in H_D^2(D)\,:\,  v\ge -H \text{ in } D \right\}
\end{equation*}
of $H_D^2(D)$. We recall that, given $v\in \bar{S}_0$, the subdifferential $\partial\mathbb{I}_{\bar{S}_0}(v)$ is the subset of the dual space
\begin{equation*}
H^{-2}(D):=\big(H_D^2(D)\big)'
\end{equation*}
of $H_D^2(D)$ given by 
\begin{equation*}
\partial\mathbb{I}_{\bar{S}_0}(v) :=\left\{ \xi\in H^{-2}(D)\,:\, \langle \xi , v-w \rangle_{H_D^2} \ge 0\,, \, w\in \bar{S}_0\right\}\,,
\end{equation*}
where $\langle \cdot , \cdot \rangle_{H_D^2}$ denotes the duality pairing between $H^{-2}(D)$ and $H_D^2(D)$. If $v\not\in \bar{S}_0$, then $\partial\mathbb{I}_{\bar{S}_0}(v):=\emptyset$.  While $\partial\mathbb{I}_{\bar{S}_0}(u)$ accounts for the non-penetrability of the insulating layer, the fourth- and second-order terms in \eqref{bennygoodman} represent forces due to plate bending and plate stretching, respectively. These forces are balanced by the electrostatic force $g(u)$ acting on the elastic plate, which is derived in \cite{LW19} and involves the electrostatic potential $\psi_u$ in the device. The latter solves the transmission problem
\begin{subequations}\label{TMP}
	\begin{align}
	\mathrm{div}(\sigma\nabla\psi_u)&=0 \quad\text{in }\ \Omega(u)\,, \label{TMP1}\\
	\llbracket \psi_u \rrbracket = \llbracket \sigma \partial_z \psi_u \rrbracket &=0\quad\text{on }\ \Sigma(u)\,, \label{TMP2}\\
	\psi_u&=h_u\quad\text{on }\ \partial\Omega(u)\,, \label{TMP3}
	\end{align}   
\end{subequations}
in the domain $\Omega(u)$, see Figure~\ref{Fig3}. In \eqref{TMP}, $\llbracket\cdot\rrbracket$ denotes the jump across the interface
$\Sigma(u)$, the dielectric permittivity $\sigma$ is given by
\begin{equation*}
\sigma(x,z) := \left\{
\begin{array}{lcl}
\sigma_1(x,z) & \text{ for } & (x,z)\in \Omega_1\,, \\
& & \vspace{-4mm}\\
\sigma_2 & \text{ for } & (x,z)\in D\times (-H,\infty)\,,
\end{array}
\right.
\end{equation*}
with 
\begin{subequations}\label{bobbybrown}
\begin{equation}\label{S}
\sigma_1\in C^2\big( \overline{\Omega}_1 \big)\,, \quad \min_{\overline{\Omega}_1} \sigma_1>0 \,,\qquad \sigma_2\in (0,\infty)\,,
\end{equation}
and the non-homogeneous Dirichlet boundary conditions $h_u$ are given by 
\begin{equation*}
h_u(x,z):= h(x,z,u(x)) = \left\{ \begin{array}{ll}
h_1(x,z,u(x))\,, & (x,z)\in \overline{\Omega}_1\,, \\
h_2(x,z,u(x))\,, & (x,z)\in \overline{\Omega_2(u)}\,,
\end{array} \right.
\end{equation*}
where
\begin{equation}\label{bobbybrown2a}
h_1: \bar{D}\times [-H-d,-H]\times [-H,\infty)\rightarrow [0,\infty)
\end{equation}
and 
\begin{equation}\label{bobbybrown2aa}
h_2: \bar{D}\times [-H,\infty)\times [-H,\infty)\rightarrow [0,\infty)
\end{equation}
are $C^2$-smooth functions satisfying
\begin{align}
h_1(x,-H,w)&=h_2(x,-H,w)\,,\quad (x,w)\in D\times [-H,\infty)\,,\label{bobbybrown2}\\
\sigma_1(x,-H)\partial_z h_1(x,-H,w)& =\sigma_2\partial_z h_2(x,-H,w)\,,\quad (x,w)\in D\times [-H,\infty)\,.\label{bobbybrown3}
\end{align}
\end{subequations}
We note that \eqref{bobbybrown2}-\eqref{bobbybrown3} imply that $h_u$ satisfies
\begin{equation}\label{hh}
\llbracket h_u \rrbracket = \llbracket \sigma \partial_z h_u \rrbracket =0\quad \text{on }\ \Sigma(u)
\end{equation} 
and thus complies with the transmission conditions \eqref{TMP2}, see \cite[Example~5.5]{LW19} for an example of functions $h_1$ and $h_2$ satisfying the above assumptions. With these assumptions, the electrostatic force $g(u)$ is computed in \cite[Theorems~1.2 and~1.3]{LW19}. It has a different expression at contact points between the plates and at points where the elastic plate is strictly above the insulating layer. Specifically, introducing the coincidence set 
\begin{equation*}
\mathcal{C}(u):=\{x\in D\,:\, u(x)=-H\}
\end{equation*}
for $u\in \bar{S}_0$, the electrostatic force is given by
\begin{subequations}\label{G}
\begin{equation}\label{g}
\begin{split}
g(u)(x) :=\ & \mathfrak{g}(u)(x) -\frac{\sigma_2}{2}  \left[ \big((\partial_x h_2)_u\big)^2+ \big((\partial_z h_2)_u+(\partial_w h_2)_u\big)^2 \right](x, u(x))\\
&  + \left[ \sigma_1(\partial_w h_1)_u\,\partial_z\psi_{u,1} \right](x,-H-d)
\end{split}
\end{equation}
for $x\in D$, where
\begin{equation}\label{ggg}
\mathfrak{g}(u)(x):=\left\{\begin{array}{ll}  \displaystyle{\frac{\sigma_2}{2} \big(1+(\partial_x u(x))^2\big)\,\big[\partial_z\psi_{u,2}-(\partial_z h_2)_u-(\partial_w h_2)_u\big]^2(x, u(x))} \,, & x\in D\setminus \mathcal{C}(u)\,,\\
\hphantom{x}\vspace{-3.5mm}\\
\displaystyle{\frac{\sigma_2}{2}\, \left[ \frac{\sigma_1}{\sigma_2}\partial_z\psi_{u,1}-(\partial_z h_2)_u-(\partial_w h_2)_u \right]^2(x, -H)}\,,  & x\in  \mathcal{C}(u)\,,
\end{array}\right.
\end{equation}
\end{subequations}
and $(\psi_{u,1},\psi_{u,2}) := \left( \psi_u \mathbf{1}_{\overline{\Omega}_1} , \psi_u \mathbf{1}_{\overline{\Omega_2(u)}}\right)$. It is readily seen from \eqref{G} that $g(u)$ features a nonlinear and nonlocal dependence on $u$, the latter being due to the terms involving $\psi_u$ in \eqref{G}.

\bigskip

The investigation of the solvability of \eqref{bennygoodman}-\eqref{TMP} is initiated in \cite{LW19}, exploiting the variational structure underlying the derivation of \eqref{bennygoodman}-\eqref{TMP} which implies that solutions to \eqref{bennygoodman} are critical points in $\bar{S}_0$ of an energy functional $E$, which is actually the total energy of the device. Specifically,  
\begin{subequations}\label{Energy}
	\begin{equation}
	E(u) := E_m(u)  + E_e(u) \label{E}
	\end{equation}
	consists of the mechanical energy 
	\begin{equation}\label{Em}
	E_m(u):=\frac{\beta}{2} \|\partial_x^2u\|_{L_2(D)}^2 +\left( \frac{\tau}{2} + \frac{\alpha}{4}\|\partial_x u\|_{L_2(D)}^2 \right) \|\partial_x u\|_{L_2(D)}^2
	\end{equation}
	and the electrostatic energy 
	\begin{equation}\label{Ee}
	E_e(u) := -\frac{1}{2} \int_{\Omega(u)} \sigma \vert\nabla \psi_u\vert^2\,\rd (x,z)\,.
	\end{equation}
\end{subequations}
 Then \eqref{bennygoodman} subject to \eqref{G} is the Euler-Lagrange equation for minimizers of $E$ in $\bar{S}_0$, see \cite{LW19}, and $g(u)$ defined in \eqref{G} corresponds to the (directional) derivative of $E_e(u)$ with respect to~$u$, see \cite[Theorem~1.4]{LW19}. The existence of solutions to \eqref{bennygoodman}  is established in \cite[Section~5]{LW19} by showing that the energy functional has at least a  minimizer on $\bar{S}_0$. This, however, requires additional assumptions ensuring that the electrostatic energy $E_e(u)$ does not grow faster than $\|u\|_{H^1(D)}^2$ as well as the coercivity of the energy functional $E$. More precisely, to guarantee the former (see \eqref{gEe} below) we assume that there are constants $m_i>0$, $i=1,2,3$, such that
\begin{subequations}\label{bobby}
\begin{equation}
\vert \partial_x h_1(x,z,w)\vert +\vert\partial_z h_1(x,z,w)\vert  \le \sqrt{m_1+m_2 w^2}\,, \quad \vert\partial_w h_1(x,z,w)\vert \le \sqrt{m_3}\,, \label{bobbybrown5}
\end{equation}
for $(x,z,w)\in \bar D \times [-H-d,-H] \times [-H,\infty)$ and
\begin{equation} 
\vert \partial_x h_2(x,z,w)\vert +\vert\partial_z h_2(x,z,w)\vert \le \sqrt{\frac{m_1+m_2 w^2}{H+w}}\,,\quad \vert\partial_w h_2(x,z,w)\vert \le \sqrt{\frac{m_3}{H+w}}\,,\label{bobbybrown6}
\end{equation}
\end{subequations}
for $(x,z,w)\in \bar D \times [-H,\infty) \times [-H,\infty)$. The existence result from \cite{LW19} then reads:

\begin{proposition}\cite[Theorems~5.1 and~5.3]{LW19}\label{T0}
Let $\beta>0$ and $\tau,\alpha\ge 0$. Assume that \eqref{bobbybrown}, \eqref{bobby}, \eqref{bobbybrown1x}, and \eqref{bobbybrown1y} hold,  and that the ground plate and the elastic plate are kept at constant, but different, electrostatic potentials; that is, there is $V>0$ such that
\begin{align}
h_1(x,-H-d,w)& =0\,,\quad (x,w)\in \bar{D}\times [-H,\infty)\,,\label{bobbybrown1x}\\
h_2(x,w,w)&=V\,,\quad (x,w)\in \bar{D}\times [-H,\infty)\,.\label{bobbybrown1y}
\end{align}
If
\begin{equation}\label{bbb}
\max\{\alpha,\mathfrak{K}\}>0
\end{equation}
with $$
\mathfrak{K} :=\beta - 4L^2 \left[ (d+1) \max\{ \|\sigma_1\|_{L_\infty(\Omega_1)} , \sigma_2\} \left( 12 m_2L^2+2 m_3 \right)- \tau \right]_+ \,,
$$
then  there is at least one solution $u\in\bar{S}_0$ to the variational inequality \eqref{bennygoodman}  in the following sense: for all $w\in \bar{S}_0$, 
$$
\int_D \left\{\beta\partial_x^2 u\,\partial_x^2 (w-u)+\big[\tau+\alpha\|\partial_x u\|_{L_2(D)}^2\big]\partial_x u\, \partial_x(w-u)\right\}\,\rd x\ge -\int_D g(u) (w-u)\, \rd x  
$$
where $g(u)$ is given by \eqref{G} and $\psi_u$ is the solution to \eqref{TMP}. Here, we interpret $\partial_x^4 u$ for $u\in \bar{S}_0$ as an element of $H^{-2}(D)$ by virtue of
$$
\langle \partial_x^4 u,\phi\rangle_{H_D^2}:=\int_D \partial_x^2 u \partial_x^2\phi\,\rd x\,,\quad \phi\in H_D^2(D)\,.
$$
Moreover, this solution can be obtained as a minimizer of $E$ on $\bar{S}_0$.
\end{proposition}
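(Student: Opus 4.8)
The plan is to construct the solution as a minimizer of the total energy $E$ on the closed convex set $\bar{S}_0$ via the direct method of the calculus of variations, and then to recover the variational inequality \eqref{bennygoodman} as the associated Euler--Lagrange inequality. First I would record that the transmission problem \eqref{TMP} is well posed for each fixed $u\in\bar{S}_0$: after subtracting a lifting of the boundary data $h_u$, the homogeneous problem is a coercive elliptic equation on $\Omega(u)$ with the piecewise-defined diffusion coefficient $\sigma$, so the Lax--Milgram theorem supplies a unique weak solution $\psi_u\in H^1(\Omega(u))$ together with the energy identity used to evaluate $E_e(u)$. Next I would quantify the growth of the electrostatic energy: using the gradient bounds \eqref{bobbybrown5}--\eqref{bobbybrown6} on $h_1,h_2$ and comparing $\psi_u$ with a suitable extension of $h_u$ into $\Omega(u)$, one obtains an a~priori estimate of the form $|E_e(u)|\le C\big(1+\|u\|_{H^1(D)}^2\big)$, with $C$ depending only on the constants $m_i$, $\|\sigma_1\|_{L_\infty(\Omega_1)}$, $\sigma_2$, $d$, and $L$. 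This is the quantitative content behind the statement that $E_e$ does not grow faster than $\|u\|_{H^1(D)}^2$.

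Granted this quadratic control of $E_e$, coercivity of $E$ on $\bar{S}_0$ follows by balancing it against the mechanical energy $E_m$, and the dichotomy in \eqref{bbb} is exactly what is required. If $\alpha>0$, the quartic term $\tfrac{\alpha}{4}\|\partial_x u\|_{L_2(D)}^4$ dominates the at most quadratic electrostatic contribution as $\|u\|_{H_D^2(D)}\to\infty$, forcing $E(u)\to+\infty$; the Poincaré inequality on $H_D^2(D)$ lets one pass from the seminorms in $E_m$ to the full norm. If instead $\alpha=0$, the electrostatic growth must be absorbed into the bending term: estimating the quadratic part of the lower bound for $E_e$, bounding $\|\partial_x u\|_{L_2(D)}^2$ by $4L^2\|\partial_x^2 u\|_{L_2(D)}^2$ via Poincaré, and comparing coefficients of $\|\partial_x^2 u\|_{L_2(D)}^2$, one finds that the residual coefficient is governed precisely by $\mathfrak K$, so that $\mathfrak K>0$ renders $E$ coercive. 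In either case $E$ is bounded below on $\bar{S}_0$ with sublevel sets bounded in $H_D^2(D)$.

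With coercivity at hand I would run the direct method. Let $(u_n)\subset\bar{S}_0$ be a minimizing sequence; it is bounded in $H_D^2(D)$, so along a subsequence $u_n\rightharpoonup u$ weakly in $H_D^2(D)$ and, by the compact one-dimensional embedding $H^2(D)\hookrightarrow C^1(\bar D)$, $u_n\to u$ strongly in $C^1(\bar D)$. Since $\bar{S}_0$ is closed and convex it is weakly closed, whence $u\in\bar{S}_0$. The mechanical energy $E_m$ is convex and continuous on $H_D^2(D)$, hence weakly lower semicontinuous, while for $E_e$ the $C^1$-convergence is exploited: pulling the moving domains $\Omega(u_n)$ back to a fixed reference rectangle by an explicit $u$-dependent change of variables transforms the transmission problems into problems on a fixed domain with coefficients converging in a suitable sense as $u_n\to u$, and the continuous dependence of the solution then yields $E_e(u_n)\to E_e(u)$. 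Combining, $E(u)\le\liminf_n E(u_n)=\inf_{\bar{S}_0}E$, so $u$ is a minimizer.

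Finally, to derive the variational inequality I would compute the one-sided directional derivative of $E$ at $u$ in admissible directions $w-u$ with $w\in\bar{S}_0$; minimality forces it to be nonnegative. The derivative of $E_m$ yields the bending and stretching terms on the left-hand side, while the derivative of $E_e$ contributes $\int_D g(u)(w-u)\,\rd x$, the force $g(u)$ of \eqref{G} being exactly the shape derivative of $E_e$ established in \cite[Theorem~1.4]{LW19}; rearranging produces the stated inequality. I expect the main obstacle to be the continuity of $E_e$ under the available convergence: because the domain $\Omega(u)$ moves with $u$ and degenerates where the coincidence set $\mathcal{C}(u)$ is nonempty, proving $E_e(u_n)\to E_e(u)$ demands uniform control of $\psi_{u_n}$ near the interface $\Sigma(u_n)$ and estimates that survive the domain transformation as $u_n$ approaches the obstacle $-H$. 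This is precisely where the structural and regularity hypotheses \eqref{bobbybrown}--\eqref{bobby} on $h_1,h_2,\sigma_1$ enter most decisively.
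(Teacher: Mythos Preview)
Your proposal is correct and follows the same route as the paper (and the cited source \cite{LW19}): the quadratic growth bound on $E_e$ is precisely inequality~\eqref{gEe}, the coercivity splits into the cases $\alpha>0$ and $\mathfrak{K}>0$ exactly as you describe, and the direct method with weak lower semicontinuity of $E_m$ and weak-$H^2$ continuity of $E_e$ (Lemma~\ref{L2}) yields a minimizer whose Euler--Lagrange inequality, computed via the shape derivative \cite[Theorem~1.4]{LW19}, is \eqref{bennygoodman}. The paper does not reprove Proposition~\ref{T0} but merely recalls it; your sketch matches both the discussion following the proposition and the analogous argument carried out in detail for the regularized functional in Proposition~\ref{L334}.
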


Obviously, a first step towards a full proof of Proposition~\ref{T0} is to solve the transmission problem~\eqref{TMP} for the electrostatic potential $\psi_u$ with sufficient regularity in order to give a meaning to the function $g$ along with deriving suitable continuity properties. We refer to  Section~\ref{Sec2.1} for a detailed account on this issue (see in particular Lemma~\ref{L6} and Lemma~\ref{L2} where these results are recalled). A second essential step in the proof of Proposition~\ref{T0} consists of deriving the coercivity of the energy functional $E$ on~$\bar{S}_0$. This property is ensured by assumption~\eqref{bbb} (along with  \eqref{bobby}). In particular, if $\alpha>0$, then the mechanical energy $E_m$ involves a super-quadratic term which allows a compensation of the negative contribution from the electrostatic energy $E_e$.

 \begin{remark}
Note that \eqref{bobbybrown1x} implies
$$
\partial_w h_1(x,-H-d,w) =0\,,\quad (x,w)\in \bar{D}\times [-H,\infty)\,,
$$
while \eqref{bobbybrown1y} implies
\begin{equation*}
\partial_x h_2(x,w,w)=0\,,\quad (x,w)\in \bar{D}\times [-H,\infty)\,,
\end{equation*}
and
\begin{equation*}
\partial_z h_2(x,w,w) + \partial_w h_2(x,w,w)=0\,,\quad (x,w)\in \bar{D}\times [-H,\infty)\,,
\end{equation*}
so that the formula \eqref{G} for $g(u)$ simplifies and becomes $g(u) = \mathfrak{g}(u)$ in \eqref{g}. In particular, the function $g(u)$ is non-negative.
\end{remark}

The aim of the present work is to establish the existence of a solution to \eqref{bennygoodman} under considerably weaker assumptions. In particular, we shall get rid of the technical and somewhat artificial assumption~\eqref{bbb}. Since \eqref{bbb} is obviously satisfied when $\alpha>0$, we shall treat $\alpha$ as zero in the following computations.
Moreover, we no longer need a sign for the function $g(u)$ and can slightly weaken assumption~\eqref{bobbybrown1y}. Indeed, we only require that
\begin{subequations}\label{KK}
\begin{equation}\label{Kbound0}
\partial_w h_1(x,-H-d,w) =0\,,\quad (x,w)\in \bar{D}\times [-H,\infty)\,,
\end{equation}
and that there is a number $K>0$ such that
\begin{equation}\label{Kbound}
\vert \partial_x h_2(x,w,w)\vert + \vert \partial_z h_2(x,w,w)+\partial_w h_2(x,w,w)\vert \le K\,,\quad (x,w)\in \bar D\times [-H,\infty)\,.
\end{equation}
\end{subequations}
Clearly, \eqref{bobbybrown1x}-\eqref{bobbybrown1y} imply \eqref{KK}. Also note that, due to \eqref{Kbound0}, the last term in the definition of $g(u)$ in \eqref{g} vanishes, i.e. $g(u)$ reduces to
\begin{equation}\label{g2}
\begin{split}
g(u)(x) =\ & \mathfrak{g}(u)(x) -\frac{\sigma_2}{2}  \left[ \big((\partial_x h_2)_u\big)^2+ \big((\partial_z h_2)_u+(\partial_w h_2)_u\big)^2 \right](x, u(x))\,.
\end{split}
\end{equation}
With these assumptions we can now formulate the main result of the present paper.

\begin{theorem}\label{T1}
Let $\beta>0$, $\tau\ge 0$, and $\alpha= 0$. Assume that \eqref{bobbybrown}, \eqref{bobby}, and \eqref{KK} hold.
 Then  there is at least one solution $u\in\bar{S}_0$ to the variational inequality \eqref{bennygoodman}  in the sense of Proposition~\ref{T0}. More precisely, the functional $E$ is bounded from below on $\bar{S}_0$ and has a minimizer on $\bar{S}_0$ which is a weak solution to \eqref{bennygoodman}.
\end{theorem}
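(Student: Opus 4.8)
The plan is to prove Theorem~\ref{T1} by the direct method of the calculus of variations, establishing that $E$ is bounded below and coercive on $\bar{S}_0$ (in an appropriate sense) and then extracting a minimizer through a weakly convergent minimizing sequence. The crucial difference from Proposition~\ref{T0} is that we no longer have the super-quadratic term ($\alpha=0$) nor the technical inequality~\eqref{bbb} at our disposal, so the compensation of the negative electrostatic energy $E_e$ must come entirely from the bending term $\frac{\beta}{2}\|\partial_x^2 u\|_{L_2(D)}^2$ together with the refined bounds~\eqref{bobby} and~\eqref{KK}.

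\medskip

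First I would record the functional-analytic setting: on $H_D^2(D)$ the seminorm $\|\partial_x^2 v\|_{L_2(D)}$ is an equivalent norm (by the clamped boundary conditions and Poincar\'e-type inequalities), so $E_m(u)$ already controls $\|u\|_{H^2(D)}^2$ up to constants once $\beta>0$. The main analytic input is a careful estimate of $E_e(u)$ from below. Using the variational characterization of $\psi_u$ as the minimizer of the Dirichlet-type energy $\frac{1}{2}\int_{\Omega(u)}\sigma|\nabla\psi_u|^2$ subject to the boundary data $h_u$, one compares $\psi_u$ with the explicit competitor $h_u$ itself and invokes the growth bounds~\eqref{bobby}. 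This should yield an inequality of the form
\begin{equation*}
-E_e(u) \le C\left(1 + \|u\|_{H^1(D)}^2\right)\,,
\end{equation*}
i.e.\ the estimate~\eqref{gEe} referenced in the text. Since $\|u\|_{H^1(D)}^2 \le C + \varepsilon\|\partial_x^2 u\|_{L_2(D)}^2$ for any $\varepsilon>0$ on $\bar{S}_0$ (again via interpolation and the constraint $u\ge -H$), the quadratic electrostatic contribution is \emph{sub-dominant} relative to the fourth-order bending term. Absorbing the small multiple of $\|\partial_x^2 u\|_{L_2(D)}^2$ into $E_m$ then gives both the lower bound and coercivity:
\begin{equation*}
E(u) \ge \frac{\beta}{4}\|\partial_x^2 u\|_{L_2(D)}^2 - C\,.
\end{equation*}

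\medskip

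With coercivity in hand, I would take a minimizing sequence $(u_k)\subset\bar{S}_0$, deduce boundedness in $H_D^2(D)$, and extract a subsequence converging weakly in $H^2(D)$ and strongly in $C^1(\bar D)$ by the compact embedding $H^2(D)\hookrightarrow C^1(\bar D)$. The constraint set $\bar{S}_0$ is weakly closed (being closed and convex), so the limit $u$ satisfies $u\ge -H$. Lower semicontinuity of $E_m$ is immediate since it is convex and continuous; the delicate point is the continuity of $E_e$ along the sequence, which is where I expect the main obstacle to lie. The difficulty is that the domain $\Omega(u_k)$ itself varies with $u_k$, and that $g$ has a genuinely different expression on the coincidence set $\mathcal{C}(u)$ than off it, so one must show that $u\mapsto E_e(u)$ is continuous with respect to $C^1$-convergence of the deformation. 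I would invoke the regularity and continuity properties of $\psi_u$ established in the cited Lemma~\ref{L6} and Lemma~\ref{L2}: transforming the transmission problem~\eqref{TMP} to the fixed reference domain via a $u$-dependent diffeomorphism turns the moving-boundary issue into continuous dependence of the transformed elliptic coefficients on $u\in C^1(\bar D)$, from which $E_e(u_k)\to E_e(u)$ follows. Combining lower semicontinuity of $E_m$ with continuity of $E_e$ shows $E(u)\le\liminf E(u_k)=\inf_{\bar{S}_0} E$, so $u$ is a minimizer. Finally, I would verify that the minimizer solves the variational inequality~\eqref{bennygoodman} by the standard convex-perturbation argument: for $w\in\bar{S}_0$ and $t\in(0,1)$, the admissible variation $u+t(w-u)\in\bar{S}_0$ gives $E(u+t(w-u))\ge E(u)$, and differentiating at $t=0^+$ produces the inequality, using that the directional derivative of $E_e$ is $\int_D g(u)(w-u)\,\rd x$ as identified in \cite[Theorem~1.4]{LW19}.
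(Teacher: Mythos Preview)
Your proposal has a genuine gap at the interpolation step. You claim that on $\bar{S}_0$ one has $\|u\|_{H^1(D)}^2 \le C_\varepsilon + \varepsilon\|\partial_x^2 u\|_{L_2(D)}^2$ for \emph{arbitrary} $\varepsilon>0$, but this is false: take $u_n=n\phi$ for a fixed nonnegative $\phi\in H_D^2(D)$, so that $u_n\in\bar{S}_0$ and both sides scale like $n^2$; dividing by $n^2$ and letting $n\to\infty$ forces $\|\phi\|_{H^1}^2\le\varepsilon\|\partial_x^2\phi\|_{L_2}^2$, which cannot hold for all $\varepsilon$. The one-sided constraint $u\ge -H$ controls $\|u_-\|_{L_2}$ but not $\|u_+\|_{L_2}$, and Poincar\'e on $H_D^2(D)$ only gives $\|u\|_{H^1}^2\le C_P\|\partial_x^2 u\|_{L_2}^2$ with a \emph{fixed} constant. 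Whether $\beta/2$ dominates the resulting constant is precisely the content of the discarded hypothesis~\eqref{bbb}; without it, the functional $E$ is not known to be coercive or even bounded below on $\bar{S}_0$ \emph{a priori}, which is the whole point of the theorem.

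The paper circumvents this by an entirely different mechanism that you do not touch. One adds a penalty term $\frac{A}{2}\|(u-k)_+\|_{L_2}^2$ to obtain a regularized functional $\mathcal{E}_k$ which \emph{is} coercive (the penalty supplies exactly the missing $L_2$ control on $u_+$). Each $\mathcal{E}_k$ then has a minimizer $u_k$ satisfying a perturbed Euler--Lagrange variational inequality. Here assumption~\eqref{KK} enters crucially---and it plays no role at all in your argument: it gives the \emph{pointwise} lower bound $g(u_k)\ge -\sigma_2 K^2$ in $D$, which, combined with Boggio's comparison principle for $\beta\partial_x^4-\tau\partial_x^2$ with clamped boundary conditions on each connected component of $\{u_k>-H\}$, yields a $k$-independent bound $\|u_k\|_{L_\infty(D)}\le\kappa_0$. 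For $k\ge\kappa_0$ the penalty vanishes identically and $u_k$ actually minimizes $E$; passing to the limit then delivers the minimizer of $E$ on $\bar{S}_0$ and, a posteriori, the boundedness from below. In short, the missing ingredients are the regularization and the comparison-principle a~priori bound driven by~\eqref{KK}.
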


Since we no longer impose assumption~\eqref{bbb} in Theorem~\ref{T1}, the boundedness from below of the functional $E$ is {\it a priori} unclear, due to the negative contribution from the electrostatic energy $E_e$. We thus shall work with regularized coercive functionals instead (see \eqref{Ek} below) and use comparison principle arguments to derive {\it a priori} bounds on minimizers of the regularized functionals, see Section~\ref{Sec2} below. We shall then prove that cluster points of these minimizers are actually minimizers of the original functional $E$. The full proof of Theorem~\ref{T1} is given in Section~\ref{Sec2} and relies on an idea introduced previously in a related work \cite{LNW20}. 

\begin{remark}\label{rem:apos}
Theorem~\ref{T1} remains valid when $\alpha>0$ and then only requires the assumptions \eqref{bobbybrown} and \eqref{bobby}. Indeed, the existence of a minimizer of $E$ on $\bar{S}_0$ is shown as in the proof of \cite[Theorem~5.1]{LW19} and this minimizer is a weak solution to \eqref{bennygoodman} as a consequence of \cite[Theorem~5.3]{LW19}.  
\end{remark}

Finally, we provide an additional property of weak solutions to \eqref{bennygoodman} when the potentials applied on the elastic plate and the ground plate are constant.

\begin{corollary}\label{C11}
Suppose \eqref{bobbybrown}, \eqref{bobbybrown1x}, and \eqref{bobbybrown1y}. If the coincidence set $\mathcal{C}(u)\subset D$ of a solution $u\in\bar{S}_0$ to \eqref{bennygoodman} is  non-empty, then it is an interval.
\end{corollary}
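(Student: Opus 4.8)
The plan is to prove that the coincidence set $\mathcal{C}(u)$ is an interval by a connectedness argument based on the sign of the electrostatic force $g(u)$ on $\mathcal{C}(u)$. Under the constant-potential assumptions \eqref{bobbybrown1x}--\eqref{bobbybrown1y}, the Remark following Proposition~\ref{T0} ensures that $g(u) = \mathfrak{g}(u) \ge 0$ everywhere, and in particular on $\mathcal{C}(u)$ the expression in \eqref{ggg} gives a nonnegative force. First I would suppose, for contradiction, that $\mathcal{C}(u)$ is nonempty but not an interval. Then one can find points $x_0 < x_1 < x_2$ in $D$ with $u(x_0) = u(x_2) = -H$ but $u(x_1) > -H$, so that the open set $\{x \in D : u(x) > -H\}$ contains a connected component $(a,b) \subset (x_0, x_2)$ on which $u > -H$ with $u(a) = u(b) = -H$ (where $a,b \in \mathcal{C}(u)$).

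The key idea is that on this interval $(a,b)$ the obstacle constraint is inactive, so the variational inequality \eqref{bennygoodman} reduces to the genuine Euler--Lagrange equation
\begin{equation*}
\beta \partial_x^4 u - \tau \partial_x^2 u = -g(u) \le 0 \quad \text{in } (a,b)\,,
\end{equation*}
in the weak sense, since $\partial\mathbb{I}_{\bar S_0}(u)$ vanishes where $u > -H$. The function $v := u + H$ then satisfies $v > 0$ in $(a,b)$, $v(a) = v(b) = 0$, and $\beta \partial_x^4 v - \tau \partial_x^2 v \le 0$ weakly on $(a,b)$. The plan is to combine this differential inequality with the boundary behaviour to derive a contradiction. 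Because $a,b$ are interior minimum points of $u$ (the global constraint is $u \ge -H$ and $u(a)=u(b)=-H$), one also obtains the natural boundary conditions $\partial_x v(a) \ge 0$ and $\partial_x v(b) \le 0$ from the one-sided behaviour of $v$ at these touchdown points. The aim is to show that a nonnegative fourth-order "superharmonic" profile with these clamped-type boundary data and nonpositive forcing cannot be strictly positive in the interior, yielding the contradiction.

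I expect the main obstacle to be the fourth-order nature of the operator, for which no elementary maximum principle is available: $\beta \partial_x^4 v - \tau \partial_x^2 v \le 0$ does \emph{not} directly force $v \le 0$ the way a second-order inequality would. The natural route is to test the inequality against a suitable comparison function or to integrate by parts carefully. Concretely, I would multiply $\beta \partial_x^4 v - \tau \partial_x^2 v \le 0$ by $v \ge 0$ and integrate over $(a,b)$; using $v(a)=v(b)=0$ together with the sign information $\partial_x v(a) \ge 0$, $\partial_x v(b) \le 0$ to control the boundary terms, one gets $\beta \|\partial_x^2 v\|_{L_2(a,b)}^2 + \tau \|\partial_x v\|_{L_2(a,b)}^2 \le (\text{boundary terms}) \le 0$, forcing $\partial_x^2 v \equiv 0$ and $\partial_x v \equiv 0$ on $(a,b)$, hence $v$ constant, hence $v \equiv 0$ by the boundary values. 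This contradicts $v > 0$ in $(a,b)$ and shows $\mathcal{C}(u)$ is connected, i.e.\ an interval.

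The delicate point in executing this scheme is the rigorous justification of the boundary terms and of the sign conditions $\partial_x v(a)\ge 0$, $\partial_x v(b)\le 0$: since $u$ solves a variational inequality rather than a classical equation, one must argue that $u \in H^2_D(D)$ has enough regularity near the touchdown points $a,b$ to make sense of the pointwise derivatives and of the integration by parts. I would address this by invoking the elliptic regularity of $u$ on the open set where the constraint is inactive (where $u$ solves a fourth-order linear equation with right-hand side $-g(u) \in L_2$, so $u$ is locally $H^4$ and thus $C^3$ there), and by a careful limiting argument at the endpoints $a,b$ using the convexity of $\bar S_0$ to extract the one-sided derivative signs from the fact that $u$ attains its minimum value $-H$ there. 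The remaining verification that $g(u) \ge 0$ on the relevant set is immediate from the cited Remark.
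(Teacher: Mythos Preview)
Your approach is essentially the same as the paper's: argue by contradiction, locate an open interval $(a,b)\subset D$ with $a,b\in\mathcal{C}(u)$ and $u>-H$ on $(a,b)$, reduce the variational inequality to the equation $\beta\partial_x^4 v-\tau\partial_x^2 v=-g(u)\le 0$ for $v:=u+H$ with clamped boundary data, and conclude $v\le 0$, contradicting $v>0$. The paper invokes Boggio's comparison principle for $\beta\partial_x^4-\tau\partial_x^2$ with clamped boundary conditions; your proposal to multiply by $v$ and integrate by parts is, in one dimension, precisely how that principle is proved, so the two arguments coincide.

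One imprecision is worth correcting. The one-sided sign conditions $\partial_x v(a)\ge 0$, $\partial_x v(b)\le 0$ are \emph{not} sufficient to control the boundary term $-\beta[\partial_x^2 v\,\partial_x v]_a^b$ arising after two integrations by parts, since you have no sign information on $\partial_x^2 v$ at the endpoints. What you actually need---and what holds---is $\partial_x v(a)=\partial_x v(b)=0$ exactly: since $u\in H_D^2(D)\hookrightarrow C^1(\bar D)$ in one space dimension and $a,b$ are \emph{interior} points of $D$ at which $u$ attains its global minimum $-H$, the derivative must vanish there. With these genuine clamped conditions, elliptic regularity (as in the proof of Proposition~\ref{prop4}) gives $v\in H^4(a,b)$ up to the boundary, the boundary terms disappear, and your integration-by-parts argument goes through cleanly to yield $\beta\|\partial_x^2 v\|_{L_2(a,b)}^2+\tau\|\partial_x v\|_{L_2(a,b)}^2\le 0$, hence $v\equiv 0$.
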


\section{Proof of Theorem~\ref{T1} and Corollary~\ref{C11}}\label{Sec2}

\subsection{Auxiliary Results}\label{Sec2.1}

Let us emphasize that the function $g(u)$ defined in \eqref{G} involves gradient traces of the electrostatic potential $\psi_u$, the latter solving the transmission problem~\eqref{TMP} posed on the non-smooth domain $\Omega(u)$ which possesses corners. In addition, $\Omega_2(u)$ need not be connected, but may consist of several components with non-Lipschitz boundaries (see Figure~\ref{Fig3}), so that traces have first to be given a meaning. While the existence of a unique variational solution $\psi_u\in h_u+H_0^1(\Omega(u))$ to \eqref{TMP} readily follows from the Lax-Milgram theorem, the required further regularity for $\psi_u$ in order to make sense of its gradient traces is thus far from being obvious. Moreover, $\psi_u$ (and hence $g(u)$) depends non-locally on~$u$ so that continuity properties with respect to the plate deformation~$u$ is non-trivial.

Nevertheless, relying on shape optimization methods and Gamma convergence techniques the following result regarding the existence to a solution of the transmission problem~\eqref{TMP} and its regularity  is shown in \cite{LW19}.

\begin{lemma} \label{L6}
Suppose \eqref{bobbybrown}. For each $u\in \bar{S}_0$, there is a unique variational solution $\psi_u \in h_u + H^1_0(\Omega(u))$ to \eqref{TMP}.  Moreover, 
\begin{equation*}
\psi_{u,1} =  \psi_u \mathbf{1}_{\overline{\Omega}_1} \in H^2(\Omega_1)\,,\qquad \psi_{u,2} = \psi_u \mathbf{1}_{\overline{\Omega_2(u)}} \in H^2(\Omega_2(u))\,,
\end{equation*} 
and $\psi_{u}$  is a strong solution to the transmission problem~\eqref{TMP} satisfying $\sigma\partial_z \psi_u\in H^1(\Omega(u))$.  Also, 
\begin{equation*}
\inf_{\partial\Omega(u)} h_u \le \psi_u(x,z) \le \sup_{\partial\Omega(u)} h_u\,, \quad (x,z)\in \overline{\Omega(u)}\,.
\end{equation*}
\end{lemma}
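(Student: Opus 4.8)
The plan is to establish the final statement, Lemma~\ref{L6}, which concerns the transmission problem~\eqref{TMP}. This lemma asserts existence, uniqueness, $H^2$-regularity on each subdomain, and an $L_\infty$-bound for the variational solution $\psi_u$. Since the text explicitly states that this result ``is shown in \cite{LW19}'' relying on ``shape optimization methods and Gamma convergence techniques,'' my proof sketch will reconstruct the natural argument rather than merely cite, proceeding in the order existence/uniqueness, then regularity, then maximum principle.

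\medskip

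\noindent\textbf{Existence and uniqueness.} First I would set up the weak formulation. Writing $\psi_u = h_u + \varphi$ with $\varphi \in H^1_0(\Omega(u))$, the transmission problem~\eqref{TMP} becomes the variational identity
\begin{equation*}
\int_{\Omega(u)} \sigma\, \nabla\varphi\cdot\nabla\phi \,\rd(x,z) = -\int_{\Omega(u)} \sigma\, \nabla h_u\cdot\nabla\phi \,\rd(x,z)\,,\qquad \phi\in H^1_0(\Omega(u))\,.
\end{equation*}
The bilinear form on the left is continuous and, since $\min_{\overline\Omega_1}\sigma_1>0$ and $\sigma_2>0$ by \eqref{S}, it is coercive on $H^1_0(\Omega(u))$ by the Poincar\'e inequality (valid because $\Omega(u)$ is bounded). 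The right-hand side is a bounded linear functional as $h_u \in H^1(\Omega(u))$ by the $C^2$-smoothness of $h_1,h_2$ together with the matching conditions \eqref{bobbybrown2}. Lax--Milgram then yields a unique $\varphi$, hence a unique $\psi_u \in h_u + H^1_0(\Omega(u))$. The transmission conditions \eqref{TMP2} are recovered weakly from the variational identity and the compatibility \eqref{hh} of the boundary data.

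\medskip

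\noindent\textbf{Regularity.} This is the step I expect to be the main obstacle, precisely because $\Omega(u)$ has corners and $\Omega_2(u)$ may be disconnected with non-Lipschitz boundary, as emphasized in the text. The key structural observation is that $\sigma$ is piecewise smooth and jumps only across the flat interface $\Sigma(u) \subset \{z=-H\}$, and $\sigma_2$ is \emph{constant}. Inside each subdomain, $\psi_{u,i}$ solves $\mathrm{div}(\sigma_i \nabla\psi_{u,i})=0$, which is uniformly elliptic with $C^1$ (indeed $C^2$) coefficients, so interior $H^2$-regularity is standard. The difficulty is up to the boundary and at the corners. I would exploit the flatness of $\Sigma(u)$ to reflect across it: because $\sigma_1(x,-H)$ and $\sigma_2$ satisfy the matching of fluxes, one can straighten and combine the two equations so that $\sigma\partial_z\psi_u$ is continuous across $\Sigma(u)$, yielding $\sigma\partial_z\psi_u\in H^1(\Omega(u))$. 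The awkward corner points (the clamped endpoints $(\pm L,0)$ and the reentrant-type geometry where the plate touches down) require either a weighted estimate or, following the cited approach, an approximation of $\Omega(u)$ by smooth domains combined with a Gamma-convergence/shape-stability argument to pass the uniform $H^2$-bounds to the limit. Here I would approximate $u$ by smooth deformations $u_n$ for which $\Omega(u_n)$ is Lipschitz (or smooth), derive uniform-in-$n$ $H^2(\Omega_i(u_n))$ bounds from the elliptic theory, and show that these bounds are preserved in the limit; the monotone structure of the domains and $L_\infty$-bounds below keep the constants controlled.

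\medskip

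\noindent\textbf{Maximum principle.} Finally, the two-sided bound $\inf_{\partial\Omega(u)} h_u \le \psi_u \le \sup_{\partial\Omega(u)} h_u$ follows from a comparison argument. Testing the weak formulation with $\phi = (\psi_u - \sup_{\partial\Omega(u)}h_u)_+ \in H^1_0(\Omega(u))$ gives $\int_{\Omega(u)} \sigma |\nabla\phi|^2 = 0$, whence $\phi\equiv 0$ by coercivity, so $\psi_u \le \sup_{\partial\Omega(u)}h_u$; the lower bound is symmetric with $(\psi_u - \inf_{\partial\Omega(u)}h_u)_-$. This uses only the nonnegativity and ellipticity of $\sigma$ and requires no regularity beyond $H^1$, so it is the most robust part of the proof.
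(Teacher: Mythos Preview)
Your proposal is correct and follows the same approach as the paper. The paper's own proof consists entirely of citations: existence, uniqueness, and $H^2$-regularity are attributed to \cite[Theorem~1.1]{LW19}, and the two-sided $L_\infty$-bound to the weak maximum principle in \cite[Chapter~7, Exercice~2.2]{Necas67} (using only $\sigma\in L_\infty(\Omega(u))$). Your Lax--Milgram argument and your test-function proof of the maximum principle are precisely the contents of those citations, and you correctly flag that the $H^2$-regularity on the non-Lipschitz domain $\Omega_2(u)$ is the genuinely nontrivial part whose full proof lives in the cited reference rather than in this paper.
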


\begin{proof}
The existence, uniqueness, and regularity of the variational solution $\psi_u$ to \eqref{TMP} follows from \cite[Theorem~1.1]{LW19}, while the upper and lower bounds for $\psi_u$ are consequences of the weak maximum principle \cite[Chapter~7, Exercice~2.2]{Necas67}, since $\sigma\in L_\infty(\Omega(u))$.
\end{proof}

The regularity of $\psi_u$ provided by Lemma~\ref{L6} in particular guarantees that $g(u)$ defined in \eqref{G} is meaningful for $u\in \bar{S}_0$. As for the continuity of $g(u)$  with respect to $u\in \bar{S}_0$ we recall:
 
\begin{lemma}\label{L2}
Suppose \eqref{bobbybrown}. 
\begin{itemize}
\item[{\bf (a)}] The mapping $g:\bar{S}_0\rightarrow L_2(D)$ is well-defined, continuous, and bounded on bounded sets, the set $\bar{S}_0$ being endowed with the topology of $H^2(D)$.
\item[{\bf (b)}] Let $(u_j)_{j\ge 1}$ be a sequence of functions in $\bar{S}_0$ such that $u_j\rightharpoonup u$ in $H^2(D)$ for some $u\in \bar{S}_0$. Then
\begin{equation*}
\lim_{j\to\infty} \|g(u_j)-g(u)\|_{ L_2(D)} = 0 \quad\text{ and }\quad \lim_{j\to\infty} E_e(u_j) = E_e(u)\,.
\end{equation*}
\end{itemize}
\end{lemma}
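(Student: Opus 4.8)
The statement is recalled from \cite{LW19}, and the strategy I would follow mirrors the analysis carried out there. The well-definedness and the bound on bounded sets in part~\textbf{(a)} are the most direct consequences of the regularity already available: Lemma~\ref{L6} provides $\psi_{u,1}\in H^2(\Omega_1)$ and $\psi_{u,2}\in H^2(\Omega_2(u))$, so the vertical gradient traces $\partial_z\psi_{u,1}$ on $D\times\{-H-d\}$ and on $\Sigma(u)$, together with $\partial_z\psi_{u,2}$ on the graph $z=u(x)$ and on $\Sigma(u)$, belong to fractional Sobolev spaces on the respective boundary pieces and in particular to $L_2$. Combining these traces with the $C^2$-data $h_1,h_2$ and with $\partial_x u\in H^1(D)\hookrightarrow C(\bar D)$ through the explicit formulas \eqref{g}--\eqref{ggg} shows $g(u)\in L_2(D)$. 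The quantitative bound follows by chaining the Lax--Milgram energy estimate for $\psi_u$ (controlled by an $H^1$-extension of $h_u$, hence by $\|u\|_{H^2(D)}$) with the $H^2$-on-pieces elliptic estimate, both of which are uniform when $u$ ranges over a bounded subset of $\bar{S}_0$.

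The substance of both parts lies in the continuity assertions, and here the central difficulty is that the domain $\Omega(u)$ itself varies with $u$, so that $\psi_{u_j}$ and $\psi_u$ live on different domains and cannot be compared directly. The plan is to transport the variable-domain transmission problems onto a fixed reference configuration. I would keep the insulating layer $\Omega_1$ fixed and flatten the free region $\Omega_2(u)$ onto the fixed rectangle $D\times(-H,0)$ by the map $(x,z)\mapsto\big(x,-H+H(z+H)/(u(x)+H)\big)$, assembling a Lipschitz diffeomorphism $\Theta_u$ of $\Omega(u)$ onto the fixed domain $\Omega_0:=D\times(-H-d,0)$. Pulling \eqref{TMP} back through $\Theta_u$ turns it into a divergence-form transmission problem on $\Omega_0$ whose coefficients are explicit rational expressions in $u$ and $\partial_x u$. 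Since $H^2(D)$ embeds compactly into $C^1(\bar D)$ in one dimension, weak convergence $u_j\rightharpoonup u$ in $H^2(D)$ upgrades to strong convergence $u_j\to u$ in $C^1(\bar D)$; this is exactly what is needed for the transported coefficients and the transported boundary data $h_{u_j}$ to converge, whence $\psi_{u_j}\to\psi_u$ in $H^1(\Omega_0)$ after transport, by Lax--Milgram together with a uniform coercivity bound.

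The main obstacle is that the flattening $\Theta_u$ degenerates precisely on the coincidence set $\mathcal{C}(u)$, where the gap $u+H$ vanishes: there $\Omega_2(u)$ pinches, may be disconnected, and has non-Lipschitz boundary, so the transported coefficients blow up and the $H^1$-transport argument alone does not deliver convergence of the gradient traces entering \eqref{g}. To overcome this I would split $D$ into a relatively compact region where $u>-H$, on which the gap is bounded below and the above transport is uniformly non-degenerate, and an arbitrarily small neighborhood of $\mathcal{C}(u)$, on which the integrands are controlled directly by the uniform $H^2$-on-pieces bounds and shown to contribute negligibly; the uniform convergence $u_j\to u$ keeps $\mathcal{C}(u_j)$ under control in this splitting. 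On the non-degenerate part the $H^2(\Omega_2(u))$ bounds together with compactness of the trace operators yield $L_2$-convergence of $\partial_z\psi_{u_j,i}$, and feeding this into the pointwise formula \eqref{g}, using the $C^2$-continuity of $h_1,h_2$ and the $C^1$-convergence of $u_j$, gives $g(u_j)\to g(u)$ in $L_2(D)$.

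The energy convergence $E_e(u_j)\to E_e(u)$ is then the softer statement and follows from the $H^1$-convergence of the transported potentials (or, alternatively, from a $\Gamma$-convergence argument for the Dirichlet integrals $\int_{\Omega(u)}\sigma|\nabla\psi_u|^2$ as in \cite{LW19}), which is why I expect the trace convergence near the coincidence set, rather than the energy convergence, to be the genuinely hard point. The sequential statement in part~\textbf{(b)} yields the norm-continuity and boundedness claimed in part~\textbf{(a)} by a standard contradiction argument combined with the compactness of bounded sets of $H^2(D)$ in $C^1(\bar D)$.
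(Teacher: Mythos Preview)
Your proposal is correct and aligns with the paper's approach: the paper's proof consists entirely of citations to \cite{LW19} (specifically Theorem~1.4, Corollary~3.12, Corollary~3.14, Lemma~3.16, and Proposition~3.17 there), and your sketch accurately captures the transport-to-a-fixed-domain strategy, the degeneracy at the coincidence set, and the splitting/$\Gamma$-convergence arguments developed in that reference. You have in fact supplied more detail than the paper itself, which simply records the lemma as a quotation of prior work.
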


\begin{proof}
Part {\bf (a)} follows from \cite[Theorem~1.4]{LW19}, from \cite[Corollary~3.14 \& Lemma~3.16]{LW19}, and the continuity of the trace operator from $H^1(\Omega_1)$ to $L_p(D\times\{-H\})$ for all $p\in [1,\infty)$.

Part~{\bf (b)} follows from \cite[Proposition~3.17 \& Corollary~3.12]{LW19}.
\end{proof}

\subsection{Minimizers for a Regularized Energy}

In the following we let $\beta>0$ and $\tau\ge 0$ and assume throughout that \eqref{bobbybrown}, \eqref{bobby}, and \eqref{KK} hold. We put
$$
\bar\sigma:=\max\left\{\|\sigma_1\|_{L_\infty(\Omega_1)}\,,\,\sigma_2\right\}\,.
$$
In order to prove Theorem~\ref{T1} it suffices to find a minimizer of the energy functional $E$ on~$\bar{S}_0$ since any such minimizer satisfies \eqref{bennygoodman} according to \cite[Theorem~5.3]{LW19} (note that \cite[Theorem~5.3]{LW19} obviously remains true without imposing \cite[Assumption~(5.2a)]{LW19}). However, as mentioned previously, the  coercivity of the energy functional $E$ is \textit{a priori} unclear when dropping assumption~\eqref{bbb}. For this reason we introduce for $k \ge 0$ the regularized functional 
\begin{equation}\label{Ek}
\mathcal{E}_k(u):= E(u) + \frac{A}{2} \|(u-k)_+\|_{L_2(D)}^2\,,\quad u\in \bar{S}_0\,,
\end{equation}
where $E(u)$ is defined in \eqref{Energy} and the constant $A$ given by
$$
A:=8 (d+1) \bar{\sigma} \left(\frac{3m_2}{2}+\frac{m_3^2 (d+1)  \bar{\sigma}}{\beta}\right) 
$$
with constants $m_j$ introduced in \eqref{bobby}. We shall now prove, for each $k>0$, the existence of a minimizer $u_k$ of $\mathcal{E}_k$ on $\bar{S}_0$ and subsequently derive an {\it a priori} bound on such minimizers, so that the additional regularizing term drops out in $\mathcal{E}_k$. We first  show the coercivity of the functional $\mathcal{E}_k$.

\begin{lemma}\label{L333}
Given $k\ge H$, there is a constant $c(k)>0$ such that
$$
\mathcal{E}_k(u)\ge \frac{\beta}{4}\|\partial_x^2 u\|_{L_2(D)}^2+\frac{A}{4}\|(u-k)_+\|_{L_2(D)}^2- c(k)\,,\quad u\in \bar{S}_0\,.
$$
\end{lemma}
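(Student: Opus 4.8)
The plan is to reduce the statement to a lower bound for the electrostatic energy $E_e$, and then to absorb the resulting lower-order norms of $u$ into the two ``good'' terms, using the penalty $\frac{A}{2}\|(u-k)_+\|_{L_2(D)}^2$ in place of the coercivity assumption \eqref{bbb}. Since $\alpha=0$ and $\tau\ge 0$, the mechanical energy satisfies $E_m(u)\ge \frac{\beta}{2}\|\partial_x^2 u\|_{L_2(D)}^2$, so it suffices to prove
\[
E_e(u)\ge -\tfrac{\beta}{4}\|\partial_x^2 u\|_{L_2(D)}^2-\tfrac{A}{4}\|(u-k)_+\|_{L_2(D)}^2-c(k)\,.
\]
The starting point is the lower bound \eqref{gEe} for $E_e$, which I would obtain (if not already at hand) from the variational characterization of $\psi_u$: since $\psi_u$ minimizes $v\mapsto \frac12\int_{\Omega(u)}\sigma|\nabla v|^2$ over $h_u+H_0^1(\Omega(u))$ and $h_u\in H^1(\Omega(u))$ (because $u\in H^2_D(D)\hookrightarrow C^1(\bar D)$), testing with the competitor $v=h_u$ gives $-E_e(u)=\frac12\int_{\Omega(u)}\sigma|\nabla\psi_u|^2\le \frac12\int_{\Omega(u)}\sigma|\nabla h_u|^2$. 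Estimating $|\nabla h_u|^2$ over $\Omega_1$ and $\Omega_2(u)$ separately via \eqref{bobbybrown5}--\eqref{bobbybrown6} — the key structural point being that the factor $1/(H+w)$ in \eqref{bobbybrown6} exactly cancels the $z$-integration over the fibre $(-H,u(x))$ of height $H+u(x)$ — yields, with $|D|=2L$,
\[
E_e(u)\ge -\bar\sigma(d+1)\left[2m_1 L+m_2\|u\|_{L_2(D)}^2+m_3\|\partial_x u\|_{L_2(D)}^2\right]\,.
\]

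Next I would bound the two lower-order norms. For $\|u\|_{L_2(D)}^2$ I would exploit the constraint $u\ge -H$ together with $k\ge H$: writing $u=\min\{u,k\}+(u-k)_+$ and observing $-H\le \min\{u,k\}\le k$, hence $|\min\{u,k\}|\le k$, I obtain $\|u\|_{L_2(D)}^2\le 4Lk^2+2\|(u-k)_+\|_{L_2(D)}^2$. For $\|\partial_x u\|_{L_2(D)}^2$ I would integrate by parts, discarding the boundary term by the clamped conditions defining $H^2_D(D)$, to get $\|\partial_x u\|_{L_2(D)}^2=-\int_D u\,\partial_x^2 u\,\rd x\le \|u\|_{L_2(D)}\|\partial_x^2 u\|_{L_2(D)}$, and then apply Young's inequality $\|\partial_x u\|_{L_2(D)}^2\le \varepsilon\|\partial_x^2 u\|_{L_2(D)}^2+\frac{1}{4\varepsilon}\|u\|_{L_2(D)}^2$ with a free parameter $\varepsilon>0$. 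The role of this last step is that it renders the coefficient of $\|\partial_x^2 u\|_{L_2(D)}^2$ arising from $E_e$ arbitrarily small.

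Finally I would combine the three estimates and fix $\varepsilon$. Substituting the bounds on $\|u\|^2$ and $\|\partial_x u\|^2$ into the lower bound for $E_e$, the coefficient of $\|\partial_x^2 u\|_{L_2(D)}^2$ becomes $\bar\sigma(d+1)m_3\varepsilon$, the coefficient of $\|(u-k)_+\|_{L_2(D)}^2$ becomes $\bar\sigma(d+1)\big(2m_2+\frac{m_3}{2\varepsilon}\big)$, and the remaining $k$-dependent and $m_1 L$ terms form $c(k)>0$. Choosing $\varepsilon=\frac{\beta}{4\bar\sigma(d+1)m_3}$ makes the first coefficient equal to $\frac{\beta}{4}$, and a direct computation gives for the second coefficient $2\bar\sigma(d+1)m_2+\frac{2\bar\sigma^2(d+1)^2m_3^2}{\beta}\le \frac{A}{4}$, which is precisely what the definition of $A$ is engineered to guarantee (the $m_3^2$-term is matched exactly and the $m_2$-term with room to spare). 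Collecting terms then yields the claimed inequality.

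The main obstacle — and the genuinely new ingredient compared with \eqref{bbb} — is the bookkeeping in this last step: the negative $\|\partial_x u\|^2$-contribution of $E_e$ must be split so that its $\|\partial_x^2 u\|^2$-part is absorbed by $E_m$ (forcing $\varepsilon$ small), while its residual $\|u\|^2$-part, rewritten through the truncation as a multiple of $\|(u-k)_+\|^2$ plus a constant, is absorbed by the penalty term. Reconciling these two competing constraints simultaneously is exactly what pins down the value of $A$, and establishing \eqref{gEe} rigorously on the irregular, possibly disconnected domain $\Omega(u)$ is the analytically delicate input feeding the argument.
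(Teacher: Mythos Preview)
Your argument is correct and follows essentially the same route as the paper: variational characterization of $\psi_u$ to obtain \eqref{gEe}, integration by parts plus Young's inequality to trade $\|\partial_x u\|_{L_2}^2$ for $\|\partial_x^2 u\|_{L_2}^2$ and $\|u\|_{L_2}^2$, and the truncation estimate $\|u\|_{L_2}^2\le 2k^2|D|+2\|(u-k)_+\|_{L_2}^2$. One small slip: the constants you quote for \eqref{gEe} are $m_1|D|$ and $m_2$, whereas the actual estimate produces $\tfrac32 m_1|D|$ and $\tfrac32 m_2$ (from $(\partial_x h+\partial_w h\,\partial_x u)^2\le 2(\partial_x h)^2+2(\partial_w h\,\partial_x u)^2$ together with the $\partial_z h$ term); with the correct factor $\tfrac32 m_2$ your final coefficient in front of $\|(u-k)_+\|_{L_2}^2$ becomes exactly $A/4$ rather than strictly less, so the conclusion is unchanged.
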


\begin{proof}
Let $u\in \bar{S}_0$. The variational characterization of $\psi_u$, see \cite[Lemma~3.2]{LW19}, readily gives
\begin{equation*}
\begin{split}
-E_e(u)&=\frac{1}{2}\int_{\Omega(u)} \sigma \vert \nabla \psi_u\vert^2\, \rd (x,z) \le \frac{1}{2}\int_{\Omega(u)} \sigma \vert \nabla h_u\vert^2\, \rd (x,z) \,.
\end{split}
\end{equation*}
Thus, invoking \eqref{bobbybrown},  Young's inequality, and the definition of $\Omega(u)$ we derive
\begin{align}
-E_e(u)&\le \int_{\Omega(u)} \sigma \left[( \partial_x h(x,z,u(x))^2+(\partial_w h(x,z,u(x))^2(\partial_x u)^2\right]\, \rd (x,z) \nonumber \\
& \quad +\frac{1}{2}\int_{\Omega(u)} \sigma (\partial_z h(x,z,u(x))^2\, \rd (x,z) \nonumber \\
&\le (d+1)   \bar{\sigma} \left\{\frac{3}{2} m_1 \vert D\vert+\frac{3}{2} m_2\|u\|_{L_2(D)}^2+m_3\|\partial_x u\|_{L_2(D)}^2\right\}\,. \label{gEe}
\end{align}
Next, since $u\in \bar{S}_0\subset H_D^2(D)$ implies
$$
\int_D \vert\partial_x u\vert^2\,\rd x=-\int_D u\partial_x^2u\,\rd x\le \|u\|_{L_2(D)} \|\partial_x^2u\|_{L_2(D)}
$$
we deduce from Young's inequality
\begin{equation*}
\begin{split}
-E_e(u)&\le  (d+1)  \bar{\sigma} \left\{\frac{3}{2} m_1 \vert D\vert+\frac{3}{2} m_2\|u\|_{L_2(D)}^2+m_3\|u\|_{L_2(D)} \|\partial_x^2u\|_{L_2(D)}\right\}\\
& \le  (d+1)  \bar{\sigma} \left\{\frac{3}{2} m_1 \vert D\vert+\left(\frac{3m_2}{2}+\frac{m_3^2 (d+1)  \bar{\sigma} }{\beta}\right)\|u\|_{L_2(D)}^2\right\} \\
& \qquad +\frac{\beta}{4} \|\partial_x^2u\|_{L_2(D)}^2\,.
\end{split}
\end{equation*}
Finally, note that
\begin{align*}
\|u\|_{L_2(D)}^2 & = \int_D u^2 \mathbf{1}_{[-H,k]}(u)\,\rd x + \int_D u^2 \mathbf{1}_{(k,\infty)}(u)\, \rd x \\
& \le k^2 \int_D \mathbf{1}_{[-H,k]}(u)\,\rd x +2  \int_D (u-k)^2 \mathbf{1}_{(k,\infty)}(u)\, \rd x + 2 k^2 \int_D \mathbf{1}_{(k,\infty)}(u)\, \rd x \\
& \le 2 k^2 |D| + 2 \|(u-k)_+\|_{L_2(D)}^2 \,.
\end{align*}
Hence, taking  the previous two inequalities into account,  the definition of $\mathcal{E}_k(u)$ entails
\begin{equation*}
\begin{split}
\mathcal{E}_k(u)&\ge  \frac{\beta}{4} \|\partial_x^2u\|_{L_2(D)}^2   + \frac{A}{2}\|(u-k)_+\|_{L_2(D)}^2 - \frac{3}{2} (d+1)  \bar{\sigma} m_1 \vert D\vert\\
& \qquad -  \frac{A}{8} \left(2\|(u-k)_+\|_{L_2(D)}^2+ 2k^2 |D| \right) \\
&\ge 
\frac{\beta}{4} \|\partial_x^2u\|_{L_2(D)}^2 + \frac{A}{4} \|(u-k)_+\|_{L_2(D)}^2 - c(k)
\end{split}
\end{equation*}
as claimed.
\end{proof}

The just established coercivity now easily yields the existence of a minimizer of $\mathcal{E}_k$ on $\bar{S}_0$. 

\begin{proposition}\label{L334}
For each $k\ge H$, the functional $\mathcal{E}_k$ has at least one minimizer $u_k\in \bar{S}_0$ on $\bar{S}_0$; that is,
\begin{equation}\label{mini}
\mathcal{E}_k(u_k)=\min_{\bar{S}_0} \mathcal{E}_k\,.
\end{equation}
Moreover, $u_k \in \bar{S}_0$ is a weak solution to the variational inequality
\begin{equation}
\beta\partial_x^4 u_k-\tau \partial_x^2 u_k+
A (u_k-k)_++\partial\mathbb{I}_{\bar{S}_0}(u_k) \ni -g(u_k) \;\;\text{ in }\;\; D\,. \label{bennygoodman2.0}
\end{equation}
\end{proposition}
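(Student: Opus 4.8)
The plan is to establish existence by the direct method of the calculus of variations, using the coercivity from Lemma~\ref{L333}, and then to turn minimality into the variational inequality \eqref{bennygoodman2.0} by exploiting the convexity of $\bar{S}_0$.

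First I would fix $k\ge H$ and take a minimizing sequence $(u_j)_{j\ge 1}$ in $\bar{S}_0$ for $\mathcal{E}_k$. By Lemma~\ref{L333} the sequence $\big(\|\partial_x^2 u_j\|_{L_2(D)}\big)_j$ is bounded, and since $u_j\in H_D^2(D)$ satisfies the clamped boundary conditions, Poincar\'e's inequality controls the full $H^2(D)$-norm of $u_j$ by $\|\partial_x^2 u_j\|_{L_2(D)}$. Thus $(u_j)_j$ is bounded in $H^2(D)$ and, along a subsequence, $u_j\rightharpoonup u_k$ in $H^2(D)$; as $\bar{S}_0$ is closed and convex, it is weakly closed, so $u_k\in\bar{S}_0$. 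It then remains to verify that $\mathcal{E}_k$ is sequentially weakly lower semicontinuous. Writing $\mathcal{E}_k=E_m+E_e+\tfrac{A}{2}\|(\cdot-k)_+\|_{L_2(D)}^2$ (recall $\alpha=0$), the mechanical energy $E_m$ is convex and continuous on $H^2(D)$, hence weakly lower semicontinuous — in fact its stretching contribution is weakly continuous through the compact embedding $H^2(D)\hookrightarrow H^1(D)$. The regularizing term is weakly continuous as well, since the compact embedding $H^2(D)\hookrightarrow L_2(D)$ gives $u_j\to u_k$ in $L_2(D)$ and $v\mapsto (v-k)_+$ is Lipschitz on $L_2(D)$. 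Finally, Lemma~\ref{L2}(b) yields $E_e(u_j)\to E_e(u_k)$. Combining these, $\mathcal{E}_k(u_k)\le\liminf_{j}\mathcal{E}_k(u_j)=\min_{\bar{S}_0}\mathcal{E}_k$, so $u_k$ is a minimizer and \eqref{mini} holds.

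Second, to obtain \eqref{bennygoodman2.0} I would fix $w\in\bar{S}_0$. By convexity $u_k+t(w-u_k)\in\bar{S}_0$ for $t\in(0,1)$, so $\mathcal{E}_k(u_k+t(w-u_k))\ge\mathcal{E}_k(u_k)$; dividing by $t$ and letting $t\to 0^+$ gives $\tfrac{\rd}{\rd t}\big|_{t=0^+}\mathcal{E}_k(u_k+t(w-u_k))\ge 0$, once the three one-sided derivatives are shown to exist. For $E_m$ this derivative is $\int_D\big[\beta\partial_x^2 u_k\,\partial_x^2(w-u_k)+\tau\partial_x u_k\,\partial_x(w-u_k)\big]\,\rd x$; for the penalty it equals $A\int_D (u_k-k)_+(w-u_k)\,\rd x$, obtained by differentiating $t\mapsto(a+tb)_+^2$ pointwise (the map $r\mapsto r_+^2$ being $C^1$) and passing to the limit by dominated convergence; and for $E_e$ it equals $\int_D g(u_k)(w-u_k)\,\rd x$, since $g(u_k)$ is precisely the directional derivative of $E_e$ at $u_k$ by \cite[Theorem~1.4]{LW19}. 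Adding these contributions and recalling the convention $\langle\partial_x^4 u_k,\phi\rangle_{H_D^2}=\int_D\partial_x^2 u_k\,\partial_x^2\phi\,\rd x$, the inequality
$$
\int_D\big[\beta\partial_x^2 u_k\,\partial_x^2(w-u_k)+\tau\partial_x u_k\,\partial_x(w-u_k)\big]\,\rd x + A\int_D(u_k-k)_+(w-u_k)\,\rd x \ge -\int_D g(u_k)(w-u_k)\,\rd x
$$
holding for all $w\in\bar{S}_0$ is exactly the weak formulation of \eqref{bennygoodman2.0} in the sense of Proposition~\ref{T0}, the subdifferential $\partial\mathbb{I}_{\bar{S}_0}(u_k)$ being encoded by the test against $w-u_k$.

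The two substantive inputs are the properties of the nonlocal, non-convex electrostatic energy $E_e$ borrowed from \cite{LW19}: its weak sequential continuity (Lemma~\ref{L2}(b)), which is indispensable because $E_e$ is not itself weakly lower semicontinuous by any elementary convexity argument, and its G\^ateaux differentiability with derivative $g$ (\cite[Theorem~1.4]{LW19}), which permits the passage from minimality to the variational inequality. I expect these to be the main obstacle, whereas the remaining ingredients — the coercivity already provided by Lemma~\ref{L333}, the Poincar\'e bound, and the differentiation of the quadratic penalty — are routine.
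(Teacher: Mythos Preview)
Your argument follows the paper's proof almost verbatim: coercivity from Lemma~\ref{L333}, direct method, weak lower semicontinuity of $E_m$, weak continuity of $E_e$ via Lemma~\ref{L2}\textbf{(b)}, then differentiation along the segment $t\mapsto u_k+t(w-u_k)$.

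There is one technical point where your write-up diverges from the paper and where a step may fail. You test directly with an arbitrary $w\in\bar{S}_0$ and invoke \cite[Theorem~1.4]{LW19} for the one-sided derivative of $E_e$. The paper is more careful here: it first takes $w\in S_0:=\{v\in H_D^2(D):v>-H\text{ in }D\}$, so that $u_k+s(w-u_k)\in S_0$ for every $s\in(0,1)$, and only then applies \cite[Theorem~1.4]{LW19}; the parenthetical ``since $u_k+s(w-u_k)\in S_0$'' in the paper is precisely the hypothesis needed for that theorem. If both $u_k$ and $w$ touch the obstacle at a common point, the whole segment stays on the obstacle there, and the differentiability result for $E_e$ (which relies on the geometry of $\Omega_2(u)$) is not known to apply. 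The fix is exactly what the paper does: derive the inequality first for $w\in S_0$, then pass to arbitrary $w\in\bar{S}_0$ by the density of $S_0$ in $\bar{S}_0$. With that adjustment your proof is complete and matches the paper's.
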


\begin{proof}
Clearly, $E_m$ defined in \eqref{Em} is weakly lower semicontinuous on $H^2(D)$ while $E_e$ is continuous with respect to the weak topology of $H^2(D)$ due to Lemma~\ref{L2}. Thanks to Lemma~\ref{L333}, the direct method of the calculus of variations now easily yields the existence of a minimizer  $u_k\in \bar{S}_0$ of $\mathcal{E}_k$ on  $\bar{S}_0$.
In particular,
$$
0\le \liminf_{s\rightarrow 0^+} \frac{1}{ s}\big(\mathcal{E}_k(u_k+s(w-u_k))-\mathcal{E}_k(u_k)\big)
$$
for any fixed
\begin{equation*}
w \in S_0 :=  \left\{ v\in H_D^2(D)\,:\,  v> -H \text{ in } D \right\} \subset \bar{S}_0\,.
\end{equation*}
It was shown in \cite[Theorem~1.4]{LW19} that (since $u_k+s(w-u_k)\in S_0$ for $s\in (0,1)$)
\begin{equation*}
\lim_{s\rightarrow 0^+} \frac{1}{s}\big(E_e(u_k+s(w-u_k))-E_e(u_k)\big)= \int_D g(u_k) (w-u_k)\, \rd x\,.
\end{equation*}
From the definition of $\mathcal{E}_k$ we then obtain by gathering the two limits that
\begin{align*}
 \int_D &\left\{\beta\partial_x^2 u_k\,\partial_x^2 (w-u_k) +  \tau  \partial_x u_k\, \partial_x (w-u_k) + A(u_k-k)_+ (w-u_k)\right\}\,\rd x \\
& \qquad\qquad \ge - \int_D g(u_k) (w-u_k)\, \rd x
\end{align*}
for all $w\in S_0$. Since $S_0$ is dense in $\bar{S}_0$,  this inequality also holds for any $w\in \bar{S}_0$,  and we thus have shown that $u_k$ satisfies the variational formulation of~\eqref{bennygoodman2.0}.
\end{proof}

\subsection{\textit{A Priori} Bounds}

We shall now show that $u_k$ is {\it a priori} bounded for $k$ large enough (making the additional term in $\mathcal{E}_k$ superfluous). To this aim we need an {\it a priori} bound for the solution to the fourth-order boundary value problem \eqref{eA3.1} subject to suitable  Dirichlet boundary conditions as stated below. The bound relies on the maximum principle for the fourth order operator $\beta\partial_x^4-\tau\partial_x^2$ with clamped boundary conditions \cite{Bo05,Gr02,LW15,Ow97}.

\begin{lemma}\label{propA3.1}
Let $G_0\ge 0$ and recall that $\beta>0$ and $\tau\ge 0$. For an interval $I:=(a,b)\subset (-L, L)$, let $S_I \in C^4([a,b])$ denote the unique solution to the boundary-value problem
\begin{equation}
\beta S_I'''' - \tau S_I'' = G_0\ , \qquad x\in (a,b)\ , \label{eA3.1}
\end{equation}
supplemented with one of the boundary conditions: 
\begin{align}
& S_I(a) + H = S_I'(a)= S_I(b) + H = S_I'(b) = 0 \quad\text { if }\quad -L<a<b<L\ , \label{eA3.2}\\
& S_I(-L) = S_I'(-L)= S_I(b) + H = S_I'(b) = 0 \quad\text { if }\quad -L=a<b<L\ , \label{eA3.3} \\
& S_I(a)+H = S_I'(a)= S_I(L) = S_I'(L) = 0 \quad\text { if }\quad -L<a<b=L\ , \label{eA3.4} \\
& S_I(-L) = S_I'(-L)= S_I(L) = S_I'(L) = 0 \quad\text { if }\quad -L=a<b=L\ . \label{eA3.5}
\end{align}
Then, there is $\kappa_0>0$ depending only on $G_0$, $\beta$, $L$, $H$, and $\tau$ (but not on $I=(a,b)$) such that 
\begin{equation*}
\left| S_I(x) \right| \le \kappa_0\ , \quad x\in [a,b]\, .
\end{equation*}
\end{lemma}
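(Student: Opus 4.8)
The plan is to reduce the problem to one with \emph{homogeneous} clamped boundary conditions and then read off the uniform bound from a single energy estimate; the sign information coming from the maximum principle is not strictly needed for the two-sided bound, but could be used as an alternative. Throughout write $\ell := b-a \le 2L$, and observe that in each of the four cases \eqref{eA3.2}--\eqref{eA3.5} the boundary values of $S_I$ lie in $\{0,-H\}$ while $S_I'$ vanishes at both endpoints. I would therefore introduce the cubic Hermite interpolant $\rho$ on $[a,b]$ determined by $\rho(a)=S_I(a)$, $\rho(b)=S_I(b)$, and $\rho'(a)=\rho'(b)=0$. Since the two Hermite basis functions attached to the nodal values are nonnegative on $[a,b]$ and sum to $1$, the function $\rho$ is a convex combination of its endpoint values, so that $\|\rho\|_{L_\infty(a,b)}\le H$ uniformly in $I$. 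Because $\rho$ is a cubic we have $\rho''''\equiv 0$, and a direct computation gives $\|\rho''\|_{L_2(a,b)}\le c\,H\,\ell^{-3/2}$.

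Setting $w:=S_I-\rho$, the function $w$ lies in $H_0^2(a,b)$ (zero value and zero slope at both endpoints) and solves $\beta w''''-\tau w''=f$ with $f:=G_0+\tau\rho''$, using $\rho''''=0$. Testing this equation against $w$ and integrating by parts twice, all boundary terms vanishing thanks to the clamped conditions on $w$, yields the energy identity $\beta\|w''\|_{L_2(a,b)}^2+\tau\|w'\|_{L_2(a,b)}^2=\int_a^b f\,w\,\rd x$. Moreover, since $w(a)=w'(a)=0$, Taylor's formula with integral remainder gives $w(x)=\int_a^x(x-s)w''(s)\,\rd s$, whence the scale-explicit inequalities $\|w\|_{L_\infty(a,b)}\le \ell^{3/2}\|w''\|_{L_2(a,b)}$ and $\|w\|_{L_2(a,b)}\le \ell^{2}\|w''\|_{L_2(a,b)}$, with constants bounded because $\ell\le 2L$.

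Combining the identity with these inequalities gives $\|w''\|_{L_2(a,b)}\le (\ell^2/\beta)\|f\|_{L_2(a,b)}$ and therefore $\|w\|_{L_\infty(a,b)}\le (\ell^{7/2}/\beta)\|f\|_{L_2(a,b)}$. Finally $\|f\|_{L_2(a,b)}\le G_0\ell^{1/2}+\tau\|\rho''\|_{L_2(a,b)}\le G_0\ell^{1/2}+c\,\tau H\,\ell^{-3/2}$, so that $\|w\|_{L_\infty(a,b)}\le \beta^{-1}\big(G_0\ell^{4}+c\,\tau H\,\ell^{2}\big)\le \beta^{-1}\big(16\,G_0 L^4+4c\,\tau H L^2\big)$. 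Adding $\|\rho\|_{L_\infty(a,b)}\le H$ yields $\|S_I\|_{L_\infty(a,b)}\le\kappa_0$ with $\kappa_0$ depending only on $G_0,\beta,L,H,\tau$ and not on $(a,b)$, which is the claim. (The existence, uniqueness, and $C^4$-regularity of $S_I$ are routine for this constant-coefficient operator, following from coercivity of $u\mapsto \beta\|u''\|_{L_2}^2+\tau\|u'\|_{L_2}^2$ on $H_0^2(a,b)$ together with elliptic regularity.)

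The one point that requires care, and which I regard as the main obstacle, is uniformity as $\ell\to 0$ in the two mixed cases \eqref{eA3.3}--\eqref{eA3.4}, where the endpoint values $0$ and $-H$ differ and hence $\|\rho''\|_{L_2(a,b)}$ blows up like $\ell^{-3/2}$. The estimate survives only because the Sobolev constant $\ell^{3/2}$ in $\|w\|_{L_\infty}\le \ell^{3/2}\|w''\|_{L_2}$ decays at exactly the compensating rate, so that the product $\ell^{7/2}\cdot\tau H\,\ell^{-3/2}=\tau H\,\ell^{2}$ stays bounded; tracking these powers of $\ell$ correctly is the crux. As an alternative one may instead exploit the sign: the maximum principle for $\beta\partial_x^4-\tau\partial_x^2$ under clamped conditions gives $S_I\ge -H$ directly when the two endpoint values coincide, and an explicit barrier then controls $S_I$ from above. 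The energy argument above, however, treats all four boundary configurations simultaneously and is insensitive to the sign of $G_0$, which is why I would prefer it.
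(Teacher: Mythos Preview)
Your argument is correct and follows the same overall strategy as the paper---subtract an auxiliary function to reduce to homogeneous clamped boundary conditions and then run an energy estimate---but the implementation differs in two respects. First, the paper rescales each interval to $[0,1]$ and, in the mixed cases \eqref{eA3.3}--\eqref{eA3.4}, subtracts a \emph{fixed} quartic polynomial $Q(y)=y^2(y^2+2(H-1)y+1-3H)$ on $[0,1]$; because $Q$ does not depend on the interval, the small-$\ell$ issue never arises and the bounds on $Q''$ and $Q''''$ are absolute constants. You instead work directly on $(a,b)$ with the cubic Hermite interpolant (which has the pleasant feature $\rho''''\equiv 0$) and compensate the blow-up of $\|\rho''\|_{L_2}\sim\ell^{-3/2}$ by the scale-explicit Sobolev factor $\ell^{7/2}$; this is the delicate point you correctly flag. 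Second, the paper treats the four boundary configurations separately and, in Cases~1 and~4, invokes Boggio's comparison principle to obtain the sign $P\ge 0$ before the energy step, whereas your route handles all four cases at once and needs no sign information. Your version is arguably tidier and, as you note, insensitive to the sign of $G_0$; the paper's rescaling, on the other hand, sidesteps the scale-tracking entirely.
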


\begin{proof} This result has already been observed in \cite[Lemma~A.1]{LNW20} and we include its proof only for the sake of completeness. Note that \eqref{eA3.1} subject to one of the boundary conditions \eqref{eA3.2}-\eqref{eA3.5} indeed admits a unique solution $S_I$.

\noindent\textbf{Case~1: $-L<a<b<L$.} Set $P(y) := S_I(a+(b-a)y)+H$ for $y\in [0,1]$ and note that $P$ solves the boundary-value problem 
\begin{equation}
\begin{split}
& \beta P'''' - \tau (b-a)^2 P'' = (b-a)^4 G_0\ , \qquad y\in (0,1)\ , \\
& P(0) = P'(0) = P(1) = P'(1) = 0\ . 
\end{split} \label{eA3.6}
\end{equation}
Since $G_0\ge 0$ we deduce that $P\ge 0$ in $(0,1)$ from a version of Boggio's comparison principle \cite{Bo05, Gr02, LW15, Ow97} . 
Testing \eqref{eA3.6} by $P$ we get
\begin{equation*}
\beta \|P''\|_{L_2(0,1)}^2 + \tau (b-a)^2 \|P'\|_{L_2(0,1)}^2 = (b-a)^4 G_0 \int_0^1 P(y)\ \rd y \ .
\end{equation*}
Since
\begin{equation*}
|P(y)| = \left| \int_0^y (y-y_*) P''(y_*) \ \rd y_* \right|  \le \| P''\|_{L_2(0,1)}\ ,
\end{equation*}
we infer from the above inequalities that
\begin{equation*}
\beta \|P\|_{L_\infty(0,1)}^2 \le \beta \|P''\|_{L_2(0,1)}^2  \le (b-a)^4 G_0 \| P\|_{L_\infty(0,1)} \le 16L^4 G_0 \| P\|_{L_\infty(0,1)}\ .
\end{equation*}
Consequently, $0\le P \le 16L^4 G_0/ \beta$ in $[0,1]$, hence $-H \le S_I \le 16L^4 G_0/ \beta - H$ in $[a,b]$.

\medskip

\noindent\textbf{Case~2: $-L=a<b<L$.} Define $Q(y):= y^2(y^2 +2(H-1)y +1- 3H)$ for $y\in [0,1]$ and note that  $Q(0)=Q'(0) = Q(1)+ H = Q'(1)= 0$. Set $P(y) := S_I(-L+(b+L)y)-Q(y)$ for $y\in [0,1]$. Then, due to \eqref{eA3.1} and \eqref{eA3.3}, $P$ solves the boundary-value problem 
\begin{equation}
\begin{split}
& \beta P'''' - \tau (b+L)^2 P'' = (b+L)^4 G_0 - \beta Q'''' + \tau (b+L)^2 Q'' \ , \qquad y\in (0,1)\ , \\
& P(0) = P'(0) = P(1) = P'(1) = 0\ . 
\end{split} \label{eA3.7}
\end{equation}
The arguments of Case~1 give
\begin{align*}
\beta \|P\|_{L_\infty(0,1)}^2 & \le \beta \|P''\|_{L_2(0,1)}^2 \\ 
&  \le \left[ (b+L)^4 G_0+  \beta \|Q''''\|_{L_\infty(0,1)}+ \tau (b+L)^2\|Q''\|_{L_\infty(0,1)}\right] \|P\|_{L_\infty(0,1)}\\
& \le \left[ (b+L)^4 G_0 + 24 \beta +14 \tau (H+1) (b+L)^2 \right] \|P\|_{L_\infty(0,1)} \\
& \le \left[ 16L^4 G_0 +  24 \beta + 56 \tau (H+1) L^2 \right] \|P\|_{L_\infty(0,1)}\ ,
\end{align*}
since $Q''''=24$,  $|Q''|\le  14 (H+1)$, and $b<L$. Therefore, 
\begin{equation*}
\begin{split}
\|S_I \|_{L_\infty(-L,b)} & \le \|P\|_{L_\infty(0,1)} + \|Q\|_{L_\infty(0,1)} \\
& \le \frac{16L^4 G_0 +24 \beta + 56 \tau (H+1) L^2}{\beta} + \|Q\|_{L_\infty(0,1)}\ .
\end{split}
\end{equation*}
\medskip

\noindent\textbf{Case~3: $-L<a<b=L$.} Define $P(y) := S_I(a+y(L-a))-Q(1-y)$ for $y\in [0,1]$, where $Q$ is as in {\bf Case~2}. Arguing as in the previous case we obtain the same bound for $\|S_I\|_{L_\infty(a,L)}$.
\medskip

\noindent\textbf{Case~4: $-L=a<b=L$.} Define $P(y) := S_I(-L+2Ly)$ for $y\in [0,1]$. Then, by \eqref{eA3.1} and \eqref{eA3.5}, $P$ solves the boundary-value problem 
\begin{equation*}
\begin{split}
& \beta P'''' - 4\tau L^2 P'' = 16L^4 G_0\ , \qquad y\in (0,1)\ , \\
& P(0) = P'(0) = P(1) = P'(1) = 0\ . 
\end{split} 
\end{equation*}
As in Case~1 we deduce that $0\le S_I \le 16L^4 G_0/\beta$ in $[-L,L]$.
\end{proof}

The previous lemma now implies the desired {\it a priori} bounds on the minimizers $u_k$.

\begin{proposition}\label{prop4}
There is $\kappa_0 \ge H>0$ depending only on $K$ introduced in \eqref{Kbound} such that, for all $k\ge H$, the minimizer $u_k\in \bar{S}_0$ of $\mathcal{E}_k$ on $\bar{S}_0$ constructed in Proposition~\ref{L334} satisfies $\|u_k\|_{L_\infty(D)}\le \kappa_0$.
\end{proposition}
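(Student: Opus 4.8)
The lower bound $u_k\ge -H$ holds automatically because $u_k\in\bar{S}_0$, so the real task is to produce an upper bound that is uniform in $k\ge H$. The first step is to bound the right-hand side $-g(u_k)$ of \eqref{bennygoodman2.0} from above by a constant depending only on $K$. Using the reduced expression \eqref{g2} together with the definition \eqref{ggg}, the term $\mathfrak{g}(u_k)$ is a nonnegative square, so
$$
g(u_k)(x) \ge -\frac{\sigma_2}{2}\Big[\big((\partial_x h_2)_u\big)^2+\big((\partial_z h_2)_u+(\partial_w h_2)_u\big)^2\Big](x,u_k(x))\ge -\frac{\sigma_2}{2}K^2,
$$
where the last inequality follows from \eqref{Kbound}, the traces being evaluated at $(x,u_k(x))$, i.e.\ with $z=w=u_k(x)$, and from $a^2+b^2\le(|a|+|b|)^2$. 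Setting $G_0:=\tfrac{\sigma_2}{2}K^2$, which depends only on $K$ (and the fixed $\sigma_2$), we obtain $-g(u_k)\le G_0$ a.e.\ in $D$.

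Next I would localize on the open set $\{u_k>-H\}$, a countable union of open intervals $I=(a,b)\subset(-L,L)$. On each such component the obstacle constraint is inactive, so testing the variational inequality with perturbations supported in $I$ in both directions turns \eqref{bennygoodman2.0} into the genuine equation $\beta\partial_x^4 u_k-\tau\partial_x^2 u_k+A(u_k-k)_+=-g(u_k)$ on $I$; since $g(u_k)\in L_2(D)$ by Lemma~\ref{L2} and $(u_k-k)_+$ is bounded, elliptic regularity promotes $u_k$ to $H^4_{\mathrm{loc}}(I)\hookrightarrow C^3$. The boundary data on $I$ then match exactly one of \eqref{eA3.2}--\eqref{eA3.5}: at an endpoint equal to $\pm L$ one has $u_k=\partial_x u_k=0$ because $u_k\in H_D^2(D)$, whereas at an interior endpoint $c\in(-L,L)$ continuity gives $u_k(c)=-H$ and, $c$ being an interior minimum of $u_k\in C^1(\bar D)$, also $\partial_x u_k(c)=0$.

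Finally I would compare $u_k$ with the solution $S_I$ furnished by Lemma~\ref{propA3.1} for \eqref{eA3.1} with this $G_0$ and the matching boundary conditions. Since $A(u_k-k)_+\ge 0$ and $-g(u_k)\le G_0$, we have $\beta\partial_x^4 u_k-\tau\partial_x^2 u_k\le G_0=\beta S_I''''-\tau S_I''$ on $I$, so $w:=u_k-S_I$ satisfies $\beta w''''-\tau w''\le 0$ with homogeneous clamped boundary data. Applying the Boggio-type comparison principle used in the proof of Lemma~\ref{propA3.1} to $-w$ yields $w\le 0$, i.e.\ $u_k\le S_I$, and Lemma~\ref{propA3.1} bounds $|S_I|\le\kappa_0$ with $\kappa_0$ depending only on $G_0,\beta,L,H,\tau$ and, crucially, independent of $I$. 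Hence $u_k\le\kappa_0$ on $\{u_k>-H\}$, while $u_k=-H\le\kappa_0$ on $\mathcal{C}(u_k)$; enlarging $\kappa_0$ if necessary so that $\kappa_0\ge H$ gives $\|u_k\|_{L_\infty(D)}\le\kappa_0$, with $\kappa_0$ depending only on $K$.

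The main obstacle is the passage from the variational inequality to the interior equation together with the identification of the boundary conditions at coincidence points: one must justify, via $H^2\hookrightarrow C^1$ and the tangential touching of the obstacle, that every component of $\{u_k>-H\}$ carries precisely the clamped data of one of \eqref{eA3.2}--\eqref{eA3.5}, and that a single constant $\kappa_0$ controls $u_k$ simultaneously on the possibly infinitely many components. The latter uniformity is exactly the $I$-independence asserted in Lemma~\ref{propA3.1}, which is why that lemma is stated in the form it is.
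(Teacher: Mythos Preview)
Your proof is correct and follows essentially the same route as the paper: bound $g(u_k)$ from below using \eqref{Kbound} (the paper takes $G_0=\sigma_2 K^2$, but your tighter $G_0=\tfrac{\sigma_2}{2}K^2$ works equally well), decompose $\{u_k>-H\}$ into open intervals, derive the genuine equation on each component by testing with $\pm\delta\theta$, and then compare with $S_I$ via Boggio's principle. The only refinement worth noting is that the paper upgrades $z:=u_k-S_I$ to $H^4(I_j)$ (not merely $H^4_{\mathrm{loc}}$), which is the natural regularity for the clamped problem with $L_2$ right-hand side and is what makes the comparison principle cleanly applicable up to the boundary.
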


\begin{proof}  Let $k\ge H$. We first note that, since $\mathfrak{g}(u_k)\ge 0$ in $D$ by \eqref{ggg},  it easily follows from \eqref{Kbound} and \eqref{g2} that
\begin{equation}\label{bono}
g(u_k)(x)\ge -G_0:= -\sigma_2 K^2\,, \quad x\in D\,.
\end{equation}
Next, since $u_k\in C(\bar{D})$ with $u_k(\pm L)=~0$, the set $\{ x \in D\,:\, u_k(x)>-H\}$
 is a non-empty open subset of $D$. Owing to \cite[IX.Pro\-po\-si\-tion~1.8]{AEIII} we can thus write it as a countable union of open intervals $(I_j)_{j\in J}$. Consider a fixed $j\in J$ and let $S_{I_j}$ denote the solution to \eqref{eA3.1} in $I_j$ subject to the associated boundary conditions on $\partial I_j$ listed in \eqref{eA3.2}-\eqref{eA3.5},  which vary according to whether $\bar{I}_j\subset D$ or not. Then Lemma~\ref{propA3.1} yields a constant $\kappa_0>H$ (independent of $j\in J$)  such that
\begin{equation}\label{Sb}
\|S_{I_j}\|_{L_\infty(I_j)}\le \kappa_0\,.
\end{equation} 
Note that, by definition of $I_j$, the function $u_k$ restricted to $I_j$ satisfies the same boundary conditions on $\partial I_j$ as $S_{I_j}$. Hence, for $z:=u_k-S_{I_j}\in H^2(I_j)$ we have $z=\partial_x z=0$ on $\partial I_j$. Moreover, if $\theta\in \mathcal{D}(I_j)$, then $u_k\pm \delta\theta\in \bar{S}_0$ for $\delta>0$ small enough since $u_k>-H$ in the support of $\theta$. Invoking  the weak formulation of \eqref{bennygoodman2.0} we derive
\begin{equation*}
\pm\delta\int_{I_j} \left\{\beta\partial_x^2 u_k\,\partial_x^2 \theta+\tau \partial_x u_k\, \partial_x\theta  +A(u_k-k)_+ \theta\right\}\,\rd x \ge \mp\delta\int_{I_j} g(u_k) \theta\, \rd x  \,,
\end{equation*}
hence
\begin{equation*}
\int_{I_j} \left\{\beta\partial_x^2 u_k\,\partial_x^2 \theta+\tau \partial_x u_k\, \partial_x\theta  +A(u_k-k)_+ \theta\right\}\,\rd x =-\int_{I_j} g(u_k) \theta\, \rd x  \,.
\end{equation*}
Thus,  we conclude that $z=u_k-S_{I_j} \in H^2(I_j)$ weakly solves the boundary value problem
\begin{subequations}\label{ep}
\begin{align}
\beta\partial_x^4 z-\tau\partial_x^2 z& = -G_0-g(u_k)-A (u_k-k)_+\;\;\text{ in }\;\; I_j\,,\label{ep1}\\
  z&=\partial_x z=0 \;\;\text{ on }\;\; \partial I_j\,. \label{ep2}
\end{align}
\end{subequations}
Now, since $g(u_k)+A (u_k-k)_+\in L_2(I_j)$ by Lemma~\ref{L2} {\bf (a)}, classical elliptic regularity theory \cite{GGS10} entails that $z=u_k-S_{I_j}\in H^4(I_j)$ is a strong solution to \eqref{ep}. Furthermore, it follows from \eqref{bono} and the non-negativity of $A(u_k-k)_+$ that the right-hand side of \eqref{ep1} is a non-positive function. Boggio's comparison principle \cite{Bo05, Gr02, LW15, Ow97} then implies that $z=u_k-S_{I_j} < 0$ in $I_j$. Consequently, we infer from \eqref{Sb} that
$\|u_k\|_{L_\infty(I_j)}\le \kappa_0$. Since $\kappa_0$ is independent of $j\in J$ and $(I_j)_{j\in J}$ covers $\{ x \in D\,:\, u_k(x)>-H\}$, the assertion follows.
\end{proof}

\subsection{Proof of Theorem~\ref{T1}}

We are now in a position to finish off the proof of Theorem~\ref{T1}. Indeed, if $u_k\in \bar{S}_0$ is the minimizer of the functional $\mathcal{E}_k$ on $\bar{S}_0$ provided by Proposition~\ref{L334}, then $-H\le u_k\le \kappa_0$ in $D$ due to Proposition~\ref{prop4}. Consequently,  for $k\ge \kappa_0$ we have
\begin{equation}\label{888}
E(u_k)=\mathcal{E}_{ \kappa_0}(u_k)=\mathcal{E}_k(u_k)\le \mathcal{E}_k(v) = E(v) + \frac{A}{2} \|(v-k)_+\|_{L_2(D)}^2 \,,\quad v\in \bar{S}_0\,.
\end{equation}
Now, since $0\in \bar{S}_0$ it follows from Lemma~\ref{L333} that, for $k\ge  \kappa_0$,
$$
\frac{\beta}{4}\|\partial_x^2 u_k\|_{L_2(D)}^2 \le \mathcal{E}_{\kappa_0}(u_k)+ c(\kappa_0)\le \mathcal{E}_{ \kappa_0}(0)+ c(\kappa_0)=E(0)+ c(\kappa_0)\,.
$$
Thus,  $(u_k)_{k\ge  \kappa_0}$ is bounded in $H^2(D)$ so that there is a subsequence (not relabeled) converging weakly in $H^2(D)$ and strongly in $H^1(D)$ towards some $u_*\in \bar{S}_0$. Since $E_m$ is  weakly lower semicontinuous on $H^2(D)$ and since $E_e$ is continuous with respect to the weak topology of $H^2(D)$ owing to Lemma~\ref{L2}~{\bf (b)}, we obtain from \eqref{888} that
\begin{equation*}
E(u_*)\le E(v)\,,\quad v\in \bar{S}_0\,,
\end{equation*}
recalling that the continuous embedding of $H^1(D)$ in $C(\bar{D})$ readily implies that
\begin{equation*}
\lim_{k\to\infty} \|(v-k)_+\|_{L_2(D)} = 0\,,\quad v\in \bar{S}_0\,.
\end{equation*}
Therefore, $u_*\in \bar{S}_0$ minimizes $E$ on $\bar{S}_0$. Now \cite[Theorem~5.3]{LW19} entails that $u_*$  satisfies the variational inequality \eqref{bennygoodman}. Alternatively, one can use the weak convergence in $H^2(D)$ and the strong convergence in $H^1(D)$ of $(u_k)_{k\ge \kappa_0}$ to $u_*$ to pass to the limit $k\to\infty$ in \eqref{bennygoodman2.0}, observing that $(g(u_k))_{k\ge\kappa_0}$ converges to $g(u)$ in $L_2(D)$ by Lemma~\ref{L2}~{\bf (b)} and that $(u_k-k)_+=0$ for $k\ge \kappa_0$. This proves Theorem~\ref{T1}. \qed

\bigskip

\subsection{Proof of Corollary~\ref{C11}}

Suppose \eqref{bobbybrown}, \eqref{bobbybrown1x}, and \eqref{bobbybrown1y} and let $u\in\bar{S}_0$ be any solution to the variational inequality \eqref{bennygoodman}. Note that \eqref{bobbybrown1x} and \eqref{bobbybrown1y} imply  $g(u) = \mathfrak{g}(u)$ in \eqref{g}. In particular, the function $g(u)$ is non-negative. Assume now for contradiction that the coincidence set $\mathcal{C}(u)$ is not an interval. Then there are  $-L<a<b<L$ with  $u(a)+H=\partial_x u(a)=u(b)+H=\partial_x u(b)=0$ and $u>-H$ in $I:=(a,b)$. 
We may then argue as in the proof of Proposition~\ref{prop4} to conclude that $z:=u+H\in H^4(I)\cap H_D^2(I)$ is a strong solution to the boundary value problem
\begin{subequations}\label{epq}
\begin{align}
\beta\partial_x^4 z-\tau\partial_x^2 z& = -g(u)\;\;\text{ in }\;\; I\,,\label{ep1q}\\
  z&=\partial_x z=0 \;\;\text{ on }\;\; \partial I\,. \label{ep2q}
\end{align}
\end{subequations}
Another application of a version of Boggio's comparison principle \cite{Bo05, Gr02, LW15, Ow97} implies $z=u+H \le  0$ in $I$. But this contradicts $u>-H$ in $I$. \qed

\bibliographystyle{siam}
\bibliography{BG_Transmission_Model_2}

\end{document}